\newtheorem{theorem}{Theorem}[section]
\newtheorem{lemma}[theorem]{Lemma}
\newtheorem{corol}[theorem]{Corollary}
\newtheorem{prop}[theorem]{Proposition}
\newtheorem{conj}[theorem]{Conjecture}
\newtheorem{problem}[theorem]{Problem}
\newtheorem{ques}[theorem]{Problem}
\theoremstyle{definition}
\newtheorem{definition}[theorem]{Definition}
\newtheorem{example}[theorem]{Example}
\theoremstyle{remark}
\newtheorem{rem}[theorem]{Remark}
\newcommand{\brho}{{\boldsymbol{\rho}}}
\newcommand{\btau}{{\boldsymbol{\tau}}}
\newcommand{\bR}{{\mathbb R}}
\newcommand{\bZ}{{\mathbb Z}}
\newcommand{\bQ}{{\mathbb Q}}
\newcommand{\bN}{{\mathbb N}}
\newcommand{\bfi}{{\mathbf i}}
\newcommand{\bfj}{{\mathbf j}}
\newcommand{\bfk}{{\mathbf k}}
\newcommand{\bfw}{{\mathbf w}}
\newcommand{\bfz}{{\mathbf z}}
\newcommand{\SD}{{\mathcal D}}
\newcommand{\SE}{{\mathcal E}}
\newcommand{\SH}{{\mathcal H}}
\newcommand{\M}{{\mathcal M}}
\newcommand{\SR}{{\mathcal R}}
\newcommand{\ST}{{\mathcal T}}
\newcommand{\SW}{{\mathcal W}}
\def\ga{\alpha}
\def\gb{\beta}
\def\gr{\rho}
\def\gS{\Sigma}
\def\gY{\Psi}
\def\i{\mathbf{i}}
\def\j{\mathbf{j}}
\def\k{\mathbf{k}}
\def\sgp{\mathrm{sgp}}
\def\bleq{\sim}
\newcommand{\card}{{\rm card\,}}
\newcommand{\diam}{{\rm diam\,}}
\newcommand{\rank}{{\rm rank}}
\newcommand{\lra}{{\longrightarrow}}
\newcommand{\mhsp}{\hspace{2em}}
\numberwithin{equation}{section}
\begin{document}

\title[Lipschitz equivalence of Cantor sets]{Lipschitz Equivalence of Self-Similar
 Sets: Algebraic and Geometric Properties}


\author{Hui Rao}
\address{Department of Mathematics, Hua Zhong Normal University, Wuhan 430079, China}
\email{hrao@mail.ccnu.edu.cn}
\thanks{The research of Rao is supported by the
NSFC grant 11171128.}

\author{Huo-Jun Ruan}
\address{Department of Mathematics, Zhejiang University, Hangzhou 310027,
China} 
\email{ruanhj@zju.edu.cn}
\thanks{The research of Ruan was supported in part by NSFC grant 11271327,
ZJNSFC grant Y6110128 and the Fundamental Research Funds for the Central Universities of China.}

\author{Yang Wang}
\address{Department of Mathematics, Michigan State
University, East Lansing, MI 48824, USA} 
\email{ywang@math.msu.edu}
\thanks{The research of Wang was supported in part by
NSF Grant DMS-1043034 and DMS-0936830}

\thanks{Corresponding author: Huo-Jun Ruan}



\subjclass[2010]{Primary Primary 28A80}

\date{}

\keywords{Lipschitz equivalence, dust-like self-similar sets,
matchable condition, algebraic rank, uniform contraction ratio}

\begin{abstract}
 In this paper we provide an up-to-date survey on the study of Lipschitz equivalence of
 self-similar sets. Lipschitz equivalence is an important property in fractal geometry
 because it preserves many key properties of fractal sets. A fundamental result by
 Falconer and Marsh  [On the Lipschitz equivalence of Cantor sets, \textit{Mathematika}, \textbf{39} (1992), 223--233]
 establishes conditions for Lipschitz equivalence based on the algebraic properties of the contraction ratios of the self-similar sets. Recently there has been other substantial progress in the field. This paper is a comprehensive survey of the field.
 It provides a summary of the important and interesting results in the field. In addition we provide detailed discussions on several important techniques that have been used to
 prove some of the key results. It is
our hope that the paper will provide a good overview of major results and techniques, and a friendly entry point for anyone who is interested in
studying problems in this field.
\end{abstract}

\maketitle

\section{Introduction}
\label{intro}

In the study of fractal geometry a fundament problem is to find ways that measure the similarity or difference of fractal sets. The concept of dimension, whether it is the
Hausdorff dimension or the box counting dimension, is widely used for such a purpose:
Two sets having different dimensions are considered to be unalike. However for
measuring differences dimension by itself is quite inadequate. Two compact sets, even with the
same dimension, may in fact be quite different in many ways. Thus it is natural to
seek a suitable quality that would allow us to tell whether two
fractal sets are ``similar''. Generally, Lipschitz equivalence is thought to be such a quality.
In \cite{FaMa92} it was pointed out that while topology may be regarded as the study
of equivalence classes of sets under homeomorphism, fractal
geometry is sometimes thought of as the study of equivalence classes
under bi-Lipschitz mappings. More restrictive maps such as isometry
tend to lead to rather uninteresting equivalent classes, while far less restrictive
maps such as general continuous maps take us completely out of geometry
into the realm of pure topology (see~\cite{Gromo07}). Bi-Lipschitz maps offer
a good balance, which lead to equivalent classes that are interesting and intriguing
both geometrically and algebraically.

There has been notable progress on the study of bi-Lipschitz equivalence classes,
especially in recent years. Yet much is still unknown, and this progress has
led to more unanswered questions.
The goal of this paper is to provide {\em a comprehensive survey of the area}. It is
our hope that the paper will provide a good overview of major results and techniques, and a friendly entry point for anyone who is interested in
studying problems in this field.

Let $E, F$ be compact sets in $\bR^d$. We say that $E$ and $F$ are
{\em Lipschitz equivalent}, denoted by $E \bleq F$, if there
exists a bijection $f: E \lra F$ which is \textit{bi-Lipschitz},
 i.e. there exists a constant $C>0$ such that
$$
     C^{-1}|x-y|\leq |f(x)-f(y)| \leq C|x-y|
$$
for all $x,y\in E$. The general problem we consider is to find conditions
under which the two sets $E$ and $F$ are Lipschitz equivalent.

Recall that in general we characterize
a self-similar set as the attractor of an {\em iterated function
system (IFS)}. Let $\{\phi_j\}_{j=1}^m$ be an IFS on $\bR^d$ where
each $\phi_j$ is a contractive similarity with contraction ratio
$0<\rho_j<1$. The attractor of the IFS is the unique nonempty
compact set $F$ satisfying $F = \bigcup_{j=1}^m \phi_j(F)$, see
\cite{Hut81}. We say that the attractor $F$ is {\em dust-like}, or
alternatively, the IFS $\{\phi_j\}$ satisfies the {\em strong
separation condition (SSC)}, if the sets $\{\phi_j(F)\}$ are
disjoint. We remark that by definition, ``dust-like self-similar set''
is not the same as a ``totally disconnected self-similar set''.
 It is well known that if $F$ is dust-like then the
Hausdorff dimension $s=\dim_H(F)$ of $F$ satisfies $\sum_{j=1}^m
\rho_j^s =1$.

Now for any $\rho_1, \dots, \rho_m\in(0,1)$ with $\sum_{j=1}^m
\rho_j^d<1$, we will call $\brho=(\rho_1, \dots, \rho_m)$ a
\emph{contraction vector}, and use the notation
$\SD(\brho)=\SD(\rho_1, \dots, \rho_m)$ to denote the set of all
dust-like self-similar sets that are the attractor of some IFS with
contraction ratios $\rho_j, j=1,\dots, m$ on $\bR^d$.   Clearly all sets in
$\SD(\brho)$ have the same Hausdorff dimension, which we denote by
$s=\dim_H\SD(\brho)$. The following property is well known, see e.g.
\cite{RRX06}.

\begin{prop}\label{prop-1.1}
  Any two sets in $\SD(\brho)$ are Lipschitz equivalent.
\end{prop}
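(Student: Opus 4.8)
The plan is to realize both sets as images of a common symbolic coding space and to show that the induced bijection is bi-Lipschitz. Write $E$ as the attractor of an IFS $\{\phi_j\}_{j=1}^m$ and $F$ as the attractor of $\{\psi_j\}_{j=1}^m$, where $\phi_j,\psi_j$ share the contraction ratio $\rho_j$. Let $\Sigma=\{1,\dots,m\}^{\bN}$ be the one-sided symbol space, and for a finite word $w=i_1\cdots i_n$ put $\phi_w=\phi_{i_1}\circ\cdots\circ\phi_{i_n}$ and $\rho_w=\rho_{i_1}\cdots\rho_{i_n}$, with analogous notation for $\psi$. Since $\phi_w$ is a similarity of ratio $\rho_w$, we have $\diam\phi_w(E)=\rho_w\diam E$ and likewise $\diam\psi_w(F)=\rho_w\diam F$. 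First I would recall that the SSC makes the coding maps $\pi_E\colon\Sigma\ra E$ and $\pi_F\colon\Sigma\ra F$, defined by $\pi_E(\i)=\bigcap_n\phi_{i_1\cdots i_n}(E)$, into bijections (the intersection is a single point because the cylinder diameters tend to $0$). The composition $f=\pi_F\circ\pi_E^{-1}\colon E\ra F$ is then the natural code-preserving bijection we wish to prove bi-Lipschitz.

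The core of the argument is a two-sided estimate on distances in terms of $\rho_w$. Because the first-level pieces $\phi_1(E),\dots,\phi_m(E)$ are pairwise disjoint compact sets, the quantity
$$
  c_E=\min_{a\ne b}\mathrm{dist}\big(\phi_a(E),\phi_b(E)\big)
$$
is strictly positive, and likewise $c_F>0$. Now take $x,y\in E$ with codes $\i,\j$ sharing a longest common prefix $w$ of length $n$, so that $x,y\in\phi_w(E)$ but lie in distinct sub-pieces $\phi_{wa}(E),\phi_{wb}(E)$ with $a\ne b$. The upper bound $|x-y|\le\diam\phi_w(E)=\rho_w\diam E$ is immediate, while applying the similarity $\phi_w$ (which scales all distances by $\rho_w$) to the gap estimate gives the lower bound $|x-y|\ge\rho_w c_E$. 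Thus
$$
  \rho_w\,c_E\le|x-y|\le\rho_w\diam E.
$$

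Since $f$ preserves codes, $f(x)$ and $f(y)$ have the same common prefix $w$ in $F$, so the identical reasoning yields $\rho_w\,c_F\le|f(x)-f(y)|\le\rho_w\diam F$. Dividing the two chains of inequalities cancels the common factor $\rho_w$ and produces
$$
  \frac{c_F}{\diam E}\le\frac{|f(x)-f(y)|}{|x-y|}\le\frac{\diam F}{c_E},
$$
so $f$ is bi-Lipschitz with constant $C=\max\{\diam F/c_E,\ \diam E/c_F\}$. The one genuinely nontrivial input is the lower bound: it rests on the strong separation condition, which via compactness furnishes the uniform gaps $c_E,c_F>0$ and lets the exact scaling of $\phi_w$ propagate that separation down to every level. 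Without the SSC this step fails, and the conclusion itself can break down, so I would expect this to be the main point requiring care.
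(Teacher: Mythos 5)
Your proof is correct and follows essentially the same route the paper takes: it factors both sets through the common symbolic coding space (the content of Proposition~\ref{prop-1.2}) and establishes the bi-Lipschitz property via the cylinder-diameter upper bound and the strong-separation gap lower bound, which is precisely the argument the paper writes out in its proof of Theorem~\ref{thm:gds-strong} and explicitly describes as the natural extension of Proposition~\ref{prop-1.1}. No gaps; the code-preserving bijection $f=\pi_F\circ\pi_E^{-1}$ and the cancellation of the common factor $\rho_w$ are exactly the key points.
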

This result tells us that in the dust-like setting all that matters is the
contraction vector. The translations in the similitudes in the IFS do not
matter. In fact, all sets in $\SD(\brho)$ are Lipschitz equivalent to
a symbolic space defined by $\brho$.
For any $m \geq 1$ let $\Sigma_m$ denote the set of
infinite words $\bfw =i_1i_2i_3\cdots$ where each
$i_j\in \{1,2, \dots, m\}$. For such a $\bfw\in\Sigma_m$ we use the
notation $\bfw(k)=i_k$ and $[\bfw]_k =i_1i_2\cdots i_k$. For any
$\brho=(\rho_1, \rho_2, \dots, \rho_m)$, $0<\rho_j<1$, we can define
a metric ${\mathbf d}_\brho(.,.)$ on $\Sigma_m$ as follows: Let
$\bfz,\bfw\in\Sigma_m$. If $\bfz(1)\neq \bfw(1)$ then set
${\mathbf d}_\brho (\bfz,\bfw)=1$; otherwise set ${\mathbf d}_\brho
(\bfz,\bfw)=\brho_{[\bfz]_k}$, where $[\bfz]_k=[\bfw]_k$ but
$\bfz(k+1) \neq \bfw(k+1)$, and $\brho_{[\bfz]_k}:=\prod_{j=1}^k
\rho_{\bfz(j)}$. It is well known that $(\Sigma_m, {\mathbf d}_\brho)$ is a metric space.
The following is easy to prove:

\begin{prop}  \label{prop-1.2}
     Let $\brho=(\rho_1, \dots, \rho_m)$ be a contraction vector
     and $E \in \SD(\brho)$.
     Then there exists a bi-Lipschitz map from
     $(\Sigma_m, {\mathbf d}_\brho)$ to $E$.
\end{prop}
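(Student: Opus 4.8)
The plan is to show that the natural coding (projection) map from $\Sigma_m$ onto $E$ is the desired bi-Lipschitz bijection. Fix an IFS $\{\phi_j\}_{j=1}^m$ with attractor $E$ and ratios $\rho_j$ satisfying the SSC, and for a finite word $u=i_1\cdots i_k$ write $\phi_u=\phi_{i_1}\circ\cdots\circ\phi_{i_k}$, a similarity of ratio $\brho_u:=\prod_{j=1}^k\rho_{i_j}$. First I would define $\pi\colon\Sigma_m\ra E$ by $\{\pi(\bfw)\}=\bigcap_{k\ge 1}\phi_{[\bfw]_k}(E)$; this is a single point because the sets $\phi_{[\bfw]_k}(E)$ are nested with diameters $\brho_{[\bfw]_k}\diam(E)\to 0$. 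Surjectivity of $\pi$ is immediate from repeatedly expanding $E=\bigcup_j\phi_j(E)$, and it satisfies the key self-similarity relation $\pi(\bfw)=\phi_{[\bfw]_k}\big(\pi(\sigma^k\bfw)\big)$, where $\sigma$ is the shift map on $\Sigma_m$.

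Next I would isolate the two geometric constants that control everything. Set $D=\diam(E)$ and, using the SSC, let $\delta=\min_{i\neq j}\mathrm{dist}\big(\phi_i(E),\phi_j(E)\big)$; since the $m$ sets $\phi_i(E)$ are pairwise disjoint and compact, $\delta>0$. Injectivity of $\pi$ then follows at once: if $\bfz\neq\bfw$ first differ at coordinate $k+1$, then $\pi(\bfz)$ and $\pi(\bfw)$ lie in the disjoint sets $\phi_{[\bfz]_k}\phi_{\bfz(k+1)}(E)$ and $\phi_{[\bfz]_k}\phi_{\bfw(k+1)}(E)$.

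The core estimate comes from combining the self-similarity relation with these constants. For $\bfz,\bfw$ first differing at coordinate $k+1$ we have $\mathbf{d}_\brho(\bfz,\bfw)=\brho_{[\bfz]_k}$, and since $\phi_{[\bfz]_k}$ scales distances by exactly $\brho_{[\bfz]_k}$,
$$|\pi(\bfz)-\pi(\bfw)|=\brho_{[\bfz]_k}\,\big|\pi(\sigma^k\bfz)-\pi(\sigma^k\bfw)\big|.$$
Because $\sigma^k\bfz$ and $\sigma^k\bfw$ begin with distinct symbols, the bracketed factor lies in $[\delta,D]$, so
$$\delta\,\mathbf{d}_\brho(\bfz,\bfw)\ \le\ |\pi(\bfz)-\pi(\bfw)|\ \le\ D\,\mathbf{d}_\brho(\bfz,\bfw).$$
Taking $C=\max\{D,\delta^{-1}\}$ yields the bi-Lipschitz bounds; the case $\bfz(1)\neq\bfw(1)$ is simply the instance $k=0$ with the empty product $\brho_\emptyset=1$.

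I expect the main obstacle to be the lower bound, that is, the left inequality, which is exactly where the SSC is indispensable: without the uniform separation $\delta>0$ the factor $|\pi(\sigma^k\bfz)-\pi(\sigma^k\bfw)|$ could be arbitrarily small and the inverse map would fail to be Lipschitz. The remainder is bookkeeping, namely verifying that $\pi$ is well defined and that the similarity ratios telescope correctly, which the nesting and the scaling relation handle cleanly.
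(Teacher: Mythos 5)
Your proof is correct: the coding map $\pi$, the separation constant $\delta>0$ guaranteed by the SSC, and the telescoping of similarity ratios give exactly the bi-Lipschitz bounds $\delta\,{\mathbf d}_\brho(\bfz,\bfw)\leq |\pi(\bfz)-\pi(\bfw)|\leq \diam(E)\,{\mathbf d}_\brho(\bfz,\bfw)$, with injectivity and surjectivity handled properly. The paper states Proposition~\ref{prop-1.2} without proof (``easy to prove''), but your argument is precisely the standard one it has in mind --- the same coding-plus-separation estimate appears, in graph-directed form, in the paper's proof of Theorem~\ref{thm:gds-strong}, where the positive minimum distance between disjoint pieces and the product of contraction ratios play identical roles.
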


\begin{rem} 
It was noted  in \cite{RRW12} that the proof for
Proposition \ref{prop-1.2} leads to the following simple but interesting result:
     Assume that $\SD(\rho_1, \dots, \rho_m)$ and $\SD(\tau_1, \dots, \tau_n)$ are
     Lipschitz equivalent. Let $s =\dim_H \SD(\rho_1, \dots, \rho_m)$.
     Then for any $r>s$, $\SD(\rho_1^r, \dots, \rho_m^r)$ and $\SD(\tau_1^r, \dots, \tau_n^r)$ are
     also Lipschitz equivalent.
\end{rem}

Proposition \ref{prop-1.1} gives a ``trivial condition'' for Lipschitz equivalence.
A generalization of this ``trivial condition'' is when one contraction ratio is
derived from another.

Let $\Sigma_m^*:=
\bigcup_{k=1}^\infty \{1,2,\dots,m\}^k$. For any word
$\bfi=i_1\cdots i_k\in\Sigma_m^*$,  we denote $[\bfi]=\{\bfi\bfw:~\bfw\in \Sigma_m\}$ and call it a \emph{symbol cylinder}.
A finite set $\{\bfj_1,\dots, \bfj_n \} \subset \Sigma_m^*$ is called a \emph{cut set} of $\Sigma_m$
if the symbol cylinders $[\bfj_1], \dots, [\bfj_n]$ tile $\Sigma_m$, i.e., they are disjoint and their union is $\Sigma_m$.

Let $\brho=(\rho_1,\dots, \rho_m)$ and $\btau=(\btau_1,\dots, \btau_n)$ be two contraction vectors. We say that  $\btau$
is {\em derived from} $\brho$ if there exists a cut set $\{\bfj_1,\dots, \bfj_n \}$ of $\Sigma_m$ such that
$\btau=(\brho_{\bfj_1},\dots, \brho_{\bfj_n})$, where $\brho_{i_1\cdots i_k}=\rho_{i_1}\cdots \rho_{i_k}$.

\begin{definition}
     Let $\brho$ and $\btau$ be two contraction vectors.
%
We say $\brho$ and
     $\btau$ are {\em equivalent}, denoted by $\brho \sim \btau$, if
     there exists a sequence
$$
   \brho=\brho_1, \brho_2, \dots, \brho_N=\btau
$$
such that $\brho_{j+1}$ is derived from $\brho_j$ or vice versa for
$1 \leq j <N$.
\end{definition}
%
%

\begin{prop}  \label{prop-1.3}
    Assume that $\brho$ is equivalent to $\btau$. Then $\SD(\brho) \bleq \SD(\btau)$.
\end{prop}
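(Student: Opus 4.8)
The plan is to reduce everything to a statement about the symbolic spaces $(\Sigma_m,{\mathbf d}_\brho)$ and then exploit the tiling structure of the cut set. Since Lipschitz equivalence is visibly symmetric and transitive, and the chain $\brho=\brho_1,\dots,\brho_N=\btau$ in the definition of $\brho\sim\btau$ is assembled from single ``derived from'' steps, it suffices to prove the proposition when $\btau$ is directly derived from $\brho$; the general case then follows by composing the resulting bi-Lipschitz maps along the chain. So I assume $\btau=(\brho_{\bfj_1},\dots,\brho_{\bfj_n})$ for a cut set $\{\bfj_1,\dots,\bfj_n\}$ of $\Sigma_m$. By Proposition~\ref{prop-1.1} it is enough to exhibit one pair $E\in\SD(\brho)$, $F\in\SD(\btau)$ with $E\bleq F$, and by Proposition~\ref{prop-1.2} each of $E,F$ is bi-Lipschitz to its symbolic model. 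Thus the whole claim collapses to showing that $(\Sigma_m,{\mathbf d}_\brho)$ and $(\Sigma_n,{\mathbf d}_\btau)$ are bi-Lipschitz equivalent.

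To build the map I would first extract a bijection from the tiling property. Since the cylinders $[\bfj_1],\dots,[\bfj_n]$ partition $\Sigma_m$, every $\bfw\in\Sigma_m$ has exactly one $\bfj_i$ as a prefix; stripping it off and iterating produces a unique factorization $\bfw=\bfj_{k_1}\bfj_{k_2}\bfj_{k_3}\cdots$ with each $k_\ell\in\{1,\dots,n\}$. This defines a bijection $\Phi:\Sigma_n\lra\Sigma_m$ by $\Phi(k_1k_2k_3\cdots)=\bfj_{k_1}\bfj_{k_2}\bfj_{k_3}\cdots$. (Disjointness of the cylinders forces the cut set to be prefix-free, which is exactly what makes this factorization well-defined and $\Phi$ a bijection.)

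The key computation is the bi-Lipschitz estimate. Take $\bfv=k_1k_2\cdots$ and $\bfv'=k_1'k_2'\cdots$ in $\Sigma_n$ agreeing in their first $\ell$ letters and differing at letter $\ell+1$, so ${\mathbf d}_\btau(\bfv,\bfv')=\tau_{k_1}\cdots\tau_{k_\ell}$. Because $\tau_{k_i}=\brho_{\bfj_{k_i}}$, this product is precisely the product of the contraction ratios along the common prefix $\bfj_{k_1}\cdots\bfj_{k_\ell}$ of $\Phi(\bfv)$ and $\Phi(\bfv')$. The images then continue with distinct cut-set words $\bfj_{k_{\ell+1}}$ and $\bfj_{k_{\ell+1}'}$, which, being prefix-free, first differ at some position $p$, so the longest common prefix of $\Phi(\bfv),\Phi(\bfv')$ is lengthened by $p-1$ further letters. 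Writing $c$ for the product of the $\rho$'s along those extra letters, one obtains ${\mathbf d}_\brho(\Phi(\bfv),\Phi(\bfv'))=c\,{\mathbf d}_\btau(\bfv,\bfv')$. The edge cases ($\ell=0$, or words differing already at the first letter) are checked the same way and are routine.

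The decisive point, and the only place finiteness of the cut set enters, is that $c$ is uniformly controlled: since $c$ is a product of at most $L-1$ contraction ratios, where $L=\max_i|\bfj_i|$ and $\rho_{\min}=\min_j\rho_j$, we have
\[
   \rho_{\min}^{L-1}\,{\mathbf d}_\btau(\bfv,\bfv')\ \le\ {\mathbf d}_\brho(\Phi(\bfv),\Phi(\bfv'))\ \le\ {\mathbf d}_\btau(\bfv,\bfv').
\]
Hence $\Phi$ is bi-Lipschitz, and composing it with the identifications of Proposition~\ref{prop-1.2} yields $E\bleq F$; chaining along the equivalence sequence finishes the proof. I expect the main obstacle to be exactly this estimate, namely confirming that the overlap factor $c$ produced inside a single cut-set word stays bounded away from $0$ and $\infty$; everything depends on the cut set being finite (bounded word lengths) together with the derivation relation $\tau_{k_i}=\brho_{\bfj_{k_i}}$ matching the multiplicative structure of the metrics exactly.
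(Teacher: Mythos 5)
Your proof is correct, but it takes a genuinely different (and longer) route than the paper's. Both arguments start with the same reduction to a single ``derived from'' step, and both rest on the same combinatorial fact: disjointness of the cylinders $[\bfj_1],\dots,[\bfj_n]$ makes the cut set prefix-free, so every $\bfw\in\Sigma_m$ factors uniquely as $\bfj_{k_1}\bfj_{k_2}\cdots$. But where you apply this factorization to symbolic sequences---building an explicit bijection $\Phi:\Sigma_n\lra\Sigma_m$ and verifying the two-sided estimate $\rho_{\min}^{L-1}\,{\mathbf d}_\btau(\bfv,\bfv')\leq {\mathbf d}_\brho(\Phi(\bfv),\Phi(\bfv'))\leq {\mathbf d}_\btau(\bfv,\bfv')$ by hand---the paper applies the re-blocking to the IFS itself: if $E\in\SD(\brho)$ is the attractor of $\{\phi_j\}_{j=1}^m$, then $E$ is also the attractor of $\{\phi_{\bfj_1},\dots,\phi_{\bfj_n}\}$, whose contraction ratios are exactly $\btau$; hence $E\in\SD(\brho)\cap\SD(\btau)$, and Proposition~\ref{prop-1.1}, applied within each family together with transitivity, finishes the proof with no metric computation whatsoever. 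Your approach buys explicitness: a concrete bi-Lipschitz map between the symbolic models, with the explicit constant $\rho_{\min}^{L-1}$ where $L=\max_i|\bfj_i|$, and you correctly isolate the two crux points (prefix-freeness, and the uniform bound on the overlap factor $c$). The paper's approach buys economy: by observing that the two families share a common element, it delegates all metric work to Proposition~\ref{prop-1.1}, work which your argument in effect re-does at the symbolic level. Your proof is the one to keep in mind if one ever needs the map itself or its Lipschitz constants; the paper's is the one to keep in mind as the structural reason the proposition is true.
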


\begin{proof} We need only show the conclusion holds when $\btau$ is derived from $\brho$. Suppose $E\in \SD(\brho)$ is the attractor of the IFS $\Phi =\{\phi_j\}_{j=1}^m$, then $E$ is also the attractor of the IFS $\{\phi_{\bfj_1},\dots, \phi_{\bfj_n}\}$, where $\phi_{i_1\cdots i_k}:=\phi_{i_1}\circ\cdots\circ\phi_{i_k}$.
Hence $\SD(\brho)$ and $\SD(\btau)$ have a common element $E$, and they are equivalent.
\end{proof}

The central question in the study of
Lipschitz equivalence of dust-like Cantor sets is: Under what conditions are two dust-like Cantor
sets Lipschitz equivalent even if they have different contraction vectors? Are there any ``nontrivial conditions''
that also lead to equivalence?

\begin{problem}\label{ques:central}
Find nontrivial sufficient conditions and necessary
conditions on $\brho$ and $\btau$ such that $\SD(\brho)\sim
\SD(\btau)$. In particular, is it possible that $\SD(\brho)
\bleq \SD(\btau)$ but $\brho$ and $\btau$ are not equivalent?
\end{problem}

As it turns out, among the known results concerning this central question,
the algebraic properties of contraction vectors have played a
fundamental role. This is a main focus of this survey.

One of the very first and most fundamental results in this area
is the following theorem, proved by
Falconer and Marsh (\cite{FaMa92}, Theorem 3.3),  that establishes a connection to
the algebraic properties of the contraction ratios:

\begin{theorem}[Falconer and Marsh]\label{theo:FaMa}
  Let $\SD(\brho)$ and $\SD(\btau)$ be Lipschitz equivalent, where $\brho=(\rho_1, \dots, \rho_m)$
  and $\btau=(\tau_1, \dots, \tau_n)$ are two contraction vectors.
  Let $s=\dim_H \SD(\brho)=\dim_H \SD(\btau)$. Then
  \begin{itemize}
  \item[\rm (1)] $\bQ(\rho_1^s,\ldots,\rho_m^s)= \bQ(\tau_1^s,\ldots,\tau_n^s)$, where $\bQ(a_1,\ldots,a_m)$
  denotes the subfield of $\bR$ generated by $\bQ$ and $a_1,\dots, a_m$.
  \item[\rm (2)] There exist positive integers $p,q$ such that
  \begin{eqnarray*}
    &\sgp(\rho_1^p,\ldots,\rho_m^p)\subseteq \sgp(\tau_1,\ldots,\tau_n),     \\
    &\sgp(\tau_1^q,\ldots,\tau_n^q)\subseteq
    \sgp(\rho_1,\ldots,\rho_m),
  \end{eqnarray*}
  where $\sgp(a_1,\ldots,a_m)$ denotes the subsemigroup of
  $(\bR^+,\times)$ generated by $a_1,\ldots,a_m$.
  \end{itemize}
\end{theorem}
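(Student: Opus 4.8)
The plan is to transfer everything to the natural self-similar measures and then extract algebra from the way a bi-Lipschitz bijection must match up cylinders. First I would fix attractors $E\in\SD(\brho)$ and $F\in\SD(\btau)$ with a bi-Lipschitz bijection $f:E\to F$ of constant $C$, and equip $E,F$ with their natural self-similar measures $\mu,\nu$ carrying weights $\rho_i^s$ and $\tau_j^s$; since $\sum_i\rho_i^s=\sum_j\tau_j^s=1$ these are probability measures, and they coincide up to constants with $\SH^s|_E$ and $\SH^s|_F$. For a symbol cylinder the basic dictionary is $\diam[\bfi]\asymp\brho_\bfi=\prod_k\rho_{i_k}$ and $\mu([\bfi])=\brho_\bfi^{\,s}$, so the \emph{diameters} of cylinders realize the semigroup $\sgp(\rho_1,\dots,\rho_m)$ while their \emph{measures} realize $\sgp(\rho_1^s,\dots,\rho_m^s)$; the same holds for $F$. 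Because $\SH^s$ scales by at most $C^{\pm s}$ under $f$, the push-forward $f_*\mu$ and $\nu$ are comparable, with Radon--Nikodym derivative in $[C^{-s},C^s]$.

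The first substantive step is a matching lemma: there is a constant $A\ge1$ so that every cylinder of $E$ is carried by $f$ to a set whose diameter and $\nu$-measure are within a factor $A$ of those of some cylinder of $F$, and symmetrically. This uses the strong separation (dust-like) structure, which lets one compare the image of a cylinder with the finitely many $F$-cylinders of comparable size that it meets, together with the diameter and measure comparabilities from the previous step. The upshot is a \emph{bounded-ratio matching} between two finitely generated multiplicative subsemigroups of $(\bR^+,\times)$: for the diameters, every element of $\sgp(\rho_i)$ lies within a factor $A$ of an element of $\sgp(\tau_j)$ and conversely; for the measures, the same holds for $\sgp(\rho_i^s)$ and $\sgp(\tau_j^s)$.

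To convert this into the semigroup statement (2) I would take logarithms, turning the generators into positive reals $-\log\rho_i$, $-\log\tau_j$ and the semigroups into numerical (additive) semigroups $\Lambda_\rho,\Lambda_\tau$. Bounded-ratio matching becomes finite Hausdorff distance between $\Lambda_\rho$ and $\Lambda_\tau$ on $(0,\infty)$, which forces the groups they generate to be commensurable and, in the rank-one (all-commensurable) case, to be eventually equal arithmetic progressions with the same gap. The integers $p,q$ then enter through a Frobenius/conductor argument: a sufficiently large power pushes every generator, and hence every nonempty positive combination, past the conductor threshold of the other numerical semigroup, giving $\sgp(\rho_1^p,\dots,\rho_m^p)\subseteq\sgp(\tau_1,\dots,\tau_n)$ and, by symmetry, the reverse inclusion with exponent $q$.

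The field statement (1) is the delicate part, and I expect it to be the main obstacle, because bounded-ratio matching by itself yields only commensurability of groups, not equality of fields. The extra input is the \emph{additive} structure of the measure: the image $f([\bfi])$ is, up to a measure-negligible error, a finite union of $F$-cylinders, so its $\nu$-measure is a $\bQ$-linear combination of products $\tau_\bfj^s$ and hence lies in $\bQ(\tau_1^s,\dots,\tau_n^s)$. Evaluating the Radon--Nikodym density at a point where the Lebesgue differentiation theorem applies, and renormalizing along a nested sequence of cylinders shrinking to that point, one lets the bounded multiplicative constant and the approximation errors cancel in the limit; the self-similar renormalization then expresses each generator $\rho_i^s$ exactly as an element of $\bQ(\tau_1^s,\dots,\tau_n^s)$, and symmetrically. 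Combining the two inclusions gives $\bQ(\rho_1^s,\dots,\rho_m^s)=\bQ(\tau_1^s,\dots,\tau_n^s)$. The crux throughout is turning approximate, constant-laden geometric data into exact algebraic identities, which the total-mass normalization $\sum_i\rho_i^s=\sum_j\tau_j^s=1$ and the self-similar scaling are what make possible.
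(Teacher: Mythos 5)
Your overall frame (self-similar measures, matching of cylinders under $f$, then extracting algebra) points in the right direction, but there is a genuine gap at the core: you replace the one \emph{exact} statement that drives the whole proof by an approximate one, and approximate information cannot yield the exact algebraic conclusions of the theorem. In the dust-like setting, strong separation plus bi-Lipschitz gives far more than your ``bounded-ratio matching'': there is an integer $n_0$ such that for every word $\bfi$ the image $f(E_\bfi)$ is \emph{exactly} a finite disjoint union $\bigcup_{r=1}^{p} F_{\bfk\bfj_r}$ of cylinders of $F$ with $|\bfj_r|\le n_0$ (Lemma~\ref{lem:FaMa-fundamental}); there is no error term at all. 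Equally essential is a finiteness statement: the ratios $g_k(x)=\SH^s(f(E_\bfi))/\SH^s(E_\bfi)$ take only \emph{finitely many} values (Lemma~\ref{lem:FM-gRatio-finite}). Since $\{g_k\}$ is a martingale with finitely many values, a.e.\ convergence forces \emph{eventual exact constancy} on some cylinder (the stable cylinder of Lemma~\ref{lem:XiRuan}), and combining this with the exact decomposition expresses each $\rho_i^s$ as a quotient of finite sums $\sum_r \btau_{\bfk\bfj_r}^s$, hence as an element of $\bQ(\tau_1^s,\dots,\tau_n^s)$ \emph{exactly}. Your substitute for this --- Lebesgue differentiation plus ``the approximation errors cancel in the limit'' --- cannot work as stated: $\bQ(\tau_1^s,\dots,\tau_n^s)$ is a countable dense subfield of $\bR$, so knowing that $\rho_i^s$ is a \emph{limit} of elements of this field says nothing; you need the limit to be attained, and that attainment is precisely what the finiteness-of-values lemma supplies and what your argument lacks.

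The gap is equally serious in part (2). Finite Hausdorff distance between the additive semigroups $\Lambda_\rho$ and $\Lambda_\tau$ does \emph{not} force commensurability of the groups they generate: take $\Lambda_\rho$ generated by $1$ and $\Lambda_\tau$ generated by $\{1,\,1+\alpha\}$ with $\alpha$ irrational; every element $k+b\alpha$ ($0\le b\le k$) lies within $1/2$ of an integer and every positive integer lies in both, so the two semigroups are within Hausdorff distance $1/2$, yet the generated groups $\bZ$ and $\bZ+\bZ\alpha$ are not commensurable and no power relation of the kind in (2) can hold. So the implication on which your Frobenius/conductor step rests is false; your approximate matching lemma is simply too weak an intermediate statement to imply (2). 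The actual mechanism again combines exactness with finiteness and a pigeonhole: the set of ratios $\brho_{\bfi_0\bfi}/\btau_{\bfk\bfj_r}$ over all words $\bfi$ is finite (Lemma~\ref{lem:HM-Ratio-Finite}), so applying this along the nested sequence $E_{\bfi_0 i},E_{\bfi_0 ii},E_{\bfi_0 iii},\dots$ produces two equal ratios; since the corresponding $F$-cylinders are nested, the equality gives $\rho_i^{p_i}=\btau_{\bfw}$ for some nonempty word $\bfw$, i.e.\ $\rho_i^{p_i}\in\sgp(\tau_1,\dots,\tau_n)$ exactly, and then $p=\mathrm{lcm}(p_1,\dots,p_m)$ works because $\sgp$ is multiplicatively closed. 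In short, the theorem is an exact algebraic statement, and both of your key reductions discard exactness too early to ever recover it.
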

Using this theorem, it is trivial to construct
dust-like self-similar sets $E$ and $F$ such that $\dim_H E=\dim_H
F$ but $E$ and $F$ are not Lipschitz equivalent. For example,
let $E$ be the middle-third Cantor set and $F$ be the dust-like Cantor set
given by the IFS $\Phi:=\{\rho x, \rho x + \frac{1}{2}(1+\rho), \rho x+ 1-\rho\}$
where $\rho = 3^{-\log_2 3}$. Then $E$ and $F$ have the same dimension. However,
they are not Lipschitz equivalent by Theorem \ref{theo:FaMa}.

Along the direction of the theorem of Falconer and Marsh, several other
theorems have been established in recent years. These theorems further
establish connections between Lipschitz equivalence and algebraic properties of
the contractions. We shall discuss them, along
with several key techniques, later in this paper.

Another interesting question on Lipschitz equivalence, in a different direction,
considers the geometric structures of self-similar sets. Perhaps
the best known problem is the one proposed by David and Semmes (\cite{DS}, Problem~11.16):
\begin{problem}\label{prob1}
  Let $\phi_i(x):=x/5+(i-1)/5$ where $i\in \{1,\cdots,5\}$. Let $M$ and
  $M^\prime$ be the attractor of the IFS $\{\phi_1, \phi_3,\phi_5\}$ and
  the IFS $\{\phi_1,\phi_4,\phi_5\}$, respectively. Are $M$ and $M^\prime$ Lipschitz
  equivalent?
\end{problem}

\begin{figure}[htbp]
\begin{center}
  \scalebox{0.5}{\includegraphics{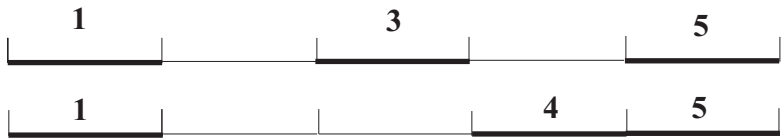}}
\end{center}
\caption{Basic intervals of the self-similar sets $M$ and $M'$}
\label{figure: DS-set}
\end{figure}

We call $M$ the $\{1,3,5\}$-set and $M^\prime$ the $\{1,4,5\}$-set. The problem is
generally known as the $\{1,3,5\}$-$\{1,4,5\}$ {\em problem}.
In this setting, $M$ is dust-like and $M^\prime$ has certain touching
structure, see Figure~\ref{figure: DS-set}. In this problem, the contraction
ratios are all identical so the difference lies entirely in the geometry of
the two IFSs. David and Semmes
conjectured that $M\not\sim M^\prime$. However, by examining graph-directed
structures of the attractors and introducing techniques to study
Lipschitz equivalence on these structures, Rao, Ruan and Xi
\cite{RRX06} proved that in fact $M\sim M^\prime$. Naturally one may
ask whether this result extends to the general setting, where we
consider the equivalence of two IFSs with the same contraction vector, but one
is dust-like while the other has some touching structure. We shall discuss this
problem in more details also later in the paper.

In other direction, some recent works are done on the Lipschitz equivalence
of $\lambda$-Cantor sets, which are self-similar sets with overlap.
We refer the readers to \cite{DH,GLWX}.

\section{Techniques for Lipschitz Equivalence of Dust-Like Cantor Sets}
\setcounter{equation}{0}


\subsection{Techniques in \cite{FaMa92}}

In \cite{FaMa92} Falconer and Marsh had developed several important techniques
to study the Lipschitz equivalence of dust-like self-similar sets.
These techniques are now viewed as being fundamental to the area. These
techniques had allowed Falconer and Marsh to prove Theorem \ref{theo:FaMa}.

Let us first introduce some notation. Let $E$ be the attractor of the
IFS $\Phi=\{\phi_1, \dots, \phi_m\}$. For any word
$\bfi=i_1\cdots i_k\in\Sigma_m^*$, we call $k$ the length of the
word $\bfi$ and denote it by $|\bfi|$. Furthermore,  a {\em
cylinder} $E_\bfi$ is defined to be $E_\bfi =
\phi_\bfi(E):=\phi_{i_1}\circ\cdots\circ\phi_{i_k}(E)$.

In this section we consider the Lipschitz equivalence of two
dust-like self-similar sets $E$ and $F$ with the following setup: We
assume that $E\in \SD(\rho_1,\ldots,\rho_m)$  is the attractor of $\Phi=\{\phi_1, \dots,
\phi_m\}$  and
$F\in \SD(\tau_1,\ldots,\tau_n)$ is the attractor of $\Psi=\{\psi_1, \dots, \psi_n\}$. We also assume in
subsections~2.1 and 2.2 that $s=\dim_H E=\dim_H F$ and $f: E \lra F$
is a bi-Lipschitz map.

An important result is the following lemma:

\begin{lemma}[\cite{FaMa92}] \label{lem:FaMa-fundamental}
There exists an integer $n_0$ such that for any $\bfi \in
\Sigma_m^*$, there exist $\bfk, \bfj_1, \dots, \bfj_{p}
\in\Sigma_n^*$ such that $F_{\bfk\bfj_1}, \dots, F_{\bfk\bfj_{p}}$
are disjoint and
\begin{equation} \label{2.1}
   f(E_\bfi)=\bigcup_{r=1}^{p} F_{\bfk\bfj_r}\subset F_\bfk,
\end{equation}
where each $|\bfj_r|\leq n_0$. In particular
$\SH^s(f(E_\bfi))=\SH^s(F_\bfk)\sum_{r=1}^{p} (\btau_{\bfj_r})^s.$
\end{lemma}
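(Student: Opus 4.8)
The plan is to split the argument into a clean topological part that produces the decomposition in \eqref{2.1}, and a quantitative geometric part that supplies the uniform depth bound $|\bfj_r|\le n_0$. First I would record the comparabilities forced by the hypotheses. Since each $\phi_\bfi$ is a similarity with ratio $\brho_\bfi$, one has $\diam E_\bfi = \brho_\bfi \diam E$, and because $f$ is bi-Lipschitz with constant $C$, $C^{-1}\diam E_\bfi \le \diam f(E_\bfi)\le C\diam E_\bfi$; the analogues hold for $F$ and $\Psi$. I would also fix the separation constants $\delta_E=\min_{a\ne b}\mathrm{dist}(E_a,E_b)>0$ and $\delta_F=\min_{a\ne b}\mathrm{dist}(F_a,F_b)>0$, which are strictly positive precisely because $E$ and $F$ are dust-like.

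The topological step is as follows. Each cylinder $E_\bfi$ is clopen in $E$ (closed as a similar copy of $E$, and open because the finitely many level-$|\bfi|$ cylinders partition $E$ into disjoint closed sets), and $f$ is a homeomorphism, so $f(E_\bfi)$ is clopen in $F$. Since the cylinders $\{F_\bfk\}$ form a basis of clopen sets and any two of them are either nested or disjoint, the collection of \emph{maximal} cylinders contained in a given clopen set is a finite, pairwise disjoint family whose union is that set. Applying this to $f(E_\bfi)$ and factoring out the longest common prefix $\bfk$ of the cylinders that occur yields $f(E_\bfi)=\bigcup_{r=1}^p F_{\bfk\bfj_r}\subset F_\bfk$, where $\bfk$ is the longest word with $f(E_\bfi)\subset F_\bfk$ and each $F_{\bfk\bfj_r}$ is maximal inside $f(E_\bfi)$. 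This is exactly the decomposition \eqref{2.1}; only the bound on $|\bfj_r|$ remains.

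For the upper estimate on $\btau_\bfk$ I would use maximality of $\bfk$: if $f(E_\bfi)$ met only one child of $F_\bfk$ it would lie in that child, contradicting maximality, so $f(E_\bfi)$ meets at least two children and hence $\diam f(E_\bfi)\ge \btau_\bfk\,\delta_F$; with $\diam f(E_\bfi)\le C\brho_\bfi\diam E$ this gives $\btau_\bfk\le c_2\,\brho_\bfi$ for an explicit $c_2$. For a lower estimate on the pieces I would use maximality of each $F_{\bfk\bfj_r}$: its parent cylinder is not contained in $f(E_\bfi)$, so it meets both $f(E_\bfi)$ and $f(E\setminus E_\bfi)$; pulling two such points back through $f^{-1}$ produces $x_1\in E_\bfi$ and $x_2\in E\setminus E_\bfi$ with $|x_1-x_2|\ge \mathrm{dist}(E_\bfi,E\setminus E_\bfi)$. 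Because the nearest part of $E\setminus E_\bfi$ to $E_\bfi$ is a sibling inside its immediate parent, this distance is at least $(\delta_E/\rho_{\max})\brho_\bfi$, where $\rho_{\max}=\max_j\rho_j$. The bi-Lipschitz lower bound then forces the parent, hence $F_{\bfk\bfj_r}$, to satisfy $\btau_{\bfk\bfj_r}\ge c_1\brho_\bfi$. Dividing by $\btau_\bfk\le c_2\brho_\bfi$ yields $\btau_{\bfj_r}\ge c_1/c_2=:c_3>0$, independent of $\bfi$; since $\btau_{\bfj_r}\le (\tau_{\max})^{|\bfj_r|}$ with $\tau_{\max}=\max_j\tau_j<1$, we get $|\bfj_r|\le n_0:=\log c_3/\log \tau_{\max}$.

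The main obstacle is this last geometric step: everything hinges on turning the two maximality conditions into the two-sided comparison of $\btau_\bfk$ with $\brho_\bfi$ and into a lower bound on each $\btau_{\bfk\bfj_r}$, and these estimates work only because the strong separation condition keeps $\delta_E,\delta_F$ strictly positive—without dust-likeness the depths $|\bfj_r|$ need not be bounded. The concluding ``in particular'' is then routine: the $F_{\bfk\bfj_r}$ are disjoint and each is a similar copy of $F$ of ratio $\btau_{\bfk\bfj_r}$, so $\SH^s(F_{\bfk\bfj_r})=(\btau_{\bfj_r})^s\,\SH^s(F_\bfk)$ by the scaling of $\SH^s$ and finite additivity, giving $\SH^s(f(E_\bfi))=\SH^s(F_\bfk)\sum_{r=1}^p (\btau_{\bfj_r})^s$.
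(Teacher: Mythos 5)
The paper itself does not prove this lemma --- it is quoted from Falconer and Marsh \cite{FaMa92} --- so there is no in-paper argument to compare against; judged on its own, your proof is correct and follows essentially the original route: decompose the clopen set $f(E_\bfi)$ into maximal cylinders, take $F_\bfk$ to be the smallest cylinder containing $f(E_\bfi)$ (cf.\ Remark~\ref{rem:for-FM-fund}), and use the bi-Lipschitz constant together with the strong separation constants to trap each ratio $\btau_{\bfk\bfj_r}/\btau_\bfk=\btau_{\bfj_r}$ above a uniform constant, which bounds $|\bfj_r|$.

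One step is stated more loosely than it should be: the claim that $\mathrm{dist}(E_\bfi,E\setminus E_\bfi)\geq (\delta_E/\rho_{\max})\,\brho_\bfi$ because ``the nearest part of $E\setminus E_\bfi$ is a sibling inside its immediate parent.'' The nearest point of $E\setminus E_\bfi$ need not lie in $E_{\bfi^*}$ at all, so this is not literally a reduction to siblings; but the bound survives by a one-line induction on $|\bfi|$: split $E\setminus E_\bfi$ into $E_{\bfi^*}\setminus E_\bfi$, which after applying $\phi_{\bfi^*}^{-1}$ gives distance at least $\brho_{\bfi^*}\delta_E$, and $E\setminus E_{\bfi^*}$, which by the inductive hypothesis is at distance at least $\brho_{(\bfi^*)^*}\delta_E\geq\brho_{\bfi^*}\delta_E$ from $E_{\bfi^*}\supset E_\bfi$. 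Likewise, passing from the lower bound on the parent $\btau_{(\bfk\bfj_r)^*}$ to $\btau_{\bfk\bfj_r}$ costs a harmless factor $\tau_{\min}$ that should be absorbed into your $c_1$. With these small patches the argument is complete, including the ``in particular'' clause, which follows as you say from the scaling $\SH^s(F_{\bfk\bfj_r})=(\btau_{\bfj_r})^s\SH^s(F_\bfk)$ and finite additivity over the disjoint pieces.
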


\begin{rem}\label{rem:for-FM-fund}
\textnormal{The above lemma implies that a bi-Lipschitz map
must behave ``nicely'' by mapping a cylinder onto a union of cylinders. We can require $F_{\k}$ to be the smallest cylinder containing $f(E_{\i})$.
It is clear that we can also require each $|\bfj_r|=n_0$ in
the above lemma. Consequently the set
$\{\k,\bfj_1, \dots, \bfj_{p}\}$  is uniquely determined by
$\bfi$. We will write $p_\bfi$ for $p$ if necessary. We call this
unique decomposition to be the \emph{maximum decomposition} of
$f(E_\bfi)$ with respect to $F$ and $n_0$. From now on, we fix $n_0$
in this section. We remark that $p$ in (\ref{2.1}) is bounded since
$p\leq n^{n_0}$.}
\end{rem}

One of the key techniques in \cite{FaMa92} is the introduction
of a sequence of functions $g_k:\
E\lra \mathbb R$, given by
\begin{equation}  \label{eq:gk-def}
    g_k(x)=\frac{\SH^s(f(E_\bfi))}{\SH^s(E_\bfi)}
\end{equation}
for  $x\in E_\bfi$, where $\bfi\in\{1,\dots, m\}^k$.  This sequence plays
a crucial role in studying the Lipschitz equivalence of dust-like
Cantor sets. We shall abuse
notation by writing
$    g_k(E_\bfi) =\frac{\SH^s(f(E_\bfi))}{\SH^s(E_\bfi)}.$
It is easy to show that
\begin{equation}\label{eq:mart-prop}
    g_k(E_\bfi)=\sum_{i=1}^m \frac{\SH^s(E_{\bfi i})}{\SH^s(E_\bfi)} g_{k+1}(E_{\bfi i}).
\end{equation}
Furthermore, it is not difficult to prove:
\begin{lemma}[\cite{FaMa92}] \label{lem:FM-gRatio-finite}
    The set $\{\frac{g_{k+1}(x)}{g_k(x)}:~ x\in E, k\geq 1\}$ is finite.
\end{lemma}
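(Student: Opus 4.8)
The plan is to express $g_{k+1}/g_k$ through the maximum decompositions furnished by Lemma~\ref{lem:FaMa-fundamental} and Remark~\ref{rem:for-FM-fund}, and then to show that the containing cylinder descends by only a bounded amount from one generation to the next. Fix $x\in E$ and $k\ge1$, and let $\bfi\in\{1,\dots,m\}^k$ and $i\in\{1,\dots,m\}$ be such that $x\in E_{\bfi i}$. Since $\SH^s(E_{\bfi i})=\rho_i^s\,\SH^s(E_\bfi)$, the definition \eqref{eq:gk-def} gives
\begin{equation*}
   \frac{g_{k+1}(x)}{g_k(x)}=\rho_i^{-s}\,\frac{\SH^s(f(E_{\bfi i}))}{\SH^s(f(E_\bfi))}.
\end{equation*}
As $\rho_i^{-s}$ assumes at most $m$ values, it suffices to show that $\SH^s(f(E_{\bfi i}))/\SH^s(f(E_\bfi))$ ranges over a finite set as $\bfi$ and $i$ vary.

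First I would record the decompositions. Let $F_\bfk$ be the smallest cylinder containing $f(E_\bfi)$ and $F_{\bfk'}$ the smallest cylinder containing $f(E_{\bfi i})$, so that $f(E_\bfi)=\bigcup_{\bfj\in\J_\bfi}F_{\bfk\bfj}$ and $f(E_{\bfi i})=\bigcup_{\bfj\in\J_{\bfi i}}F_{\bfk'\bfj}$ with $\J_\bfi,\J_{\bfi i}\subseteq\{1,\dots,n\}^{n_0}$. Because $f(E_{\bfi i})\subseteq f(E_\bfi)\subseteq F_\bfk$ and cylinders are pairwise nested or disjoint, $\bfk$ is a prefix of $\bfk'$; write $\bfk'=\bfk\mathbf u$. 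Using $\SH^s(F_\bfk)=\btau_\bfk^s\,\SH^s(F)$ together with the measure identity of Lemma~\ref{lem:FaMa-fundamental}, I obtain
\begin{equation*}
   \frac{\SH^s(f(E_{\bfi i}))}{\SH^s(f(E_\bfi))}
   =\btau_{\mathbf u}^{\,s}\,
   \frac{\sum_{\bfj\in\J_{\bfi i}}\btau_\bfj^s}{\sum_{\bfj\in\J_\bfi}\btau_\bfj^s}.
\end{equation*}
Since $\J_\bfi$ and $\J_{\bfi i}$ range over the finitely many subsets of $\{1,\dots,n\}^{n_0}$, both sums take only finitely many positive values, and the whole question reduces to bounding $|\mathbf u|$ independently of $\bfi$ and $i$.

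The hard part is this bound, and it is exactly where the strong separation condition enters. As $F$ is dust-like, its children $F_1,\dots,F_n$ are pairwise disjoint, so $\delta_0:=\min_{a\ne b}\mathrm{dist}(F_a,F_b)/\diam(F)>0$, and by self-similarity $\mathrm{dist}(F_{\bfk a},F_{\bfk b})\ge\delta_0\diam(F_\bfk)$ for all $\bfk$ and $a\ne b$. Because $F_{\bfk'}$ is the \emph{smallest} cylinder containing $f(E_{\bfi i})$, the set $f(E_{\bfi i})$ lies in no single child of $F_{\bfk'}$ and hence meets two of them, forcing $\diam(f(E_{\bfi i}))\ge\delta_0\diam(F_{\bfk'})$. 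Feeding in the bi-Lipschitz bounds $C^{-1}\diam(E_{\bfi i})\le\diam(f(E_{\bfi i}))\le C\diam(E_{\bfi i})$, the identity $\diam(E_{\bfi i})=\brho_{\bfi i}\diam(E)$, and the analogous estimate for $f(E_\bfi)$ at $F_\bfk$, I get
\begin{equation*}
   \frac{\diam(E)}{C\,\diam(F)}\,\brho_{\bfi i}\le\btau_{\bfk'}
   \le\frac{C\,\diam(E)}{\delta_0\,\diam(F)}\,\brho_{\bfi i},
   \qquad
   \btau_\bfk\le\frac{C\,\diam(E)}{\delta_0\,\diam(F)}\,\brho_\bfi .
\end{equation*}
Dividing the first lower bound by the bound on $\btau_\bfk$ gives $\btau_{\mathbf u}=\btau_{\bfk'}/\btau_\bfk\ge(\delta_0/C^2)\rho_i\ge(\delta_0/C^2)\rho_{\min}$, a positive constant free of $\bfi,i$. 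Since $\btau_{\mathbf u}\le\tau_{\max}^{|\mathbf u|}$ with $\tau_{\max}=\max_c\tau_c<1$, this bounds $|\mathbf u|$ by an absolute constant $L_0$. Hence $\mathbf u$ lies in a finite set of words, $\btau_{\mathbf u}^{\,s}$ takes finitely many values, and combining the three finite quantities $\rho_i^{-s}$, the sum-ratio, and $\btau_{\mathbf u}^{\,s}$ shows that $\{g_{k+1}(x)/g_k(x)\}$ is finite. The only point requiring care is the minimality step that licenses the separation estimate.
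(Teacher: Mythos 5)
Your proof is correct. The paper itself omits the argument (deferring to \cite{FaMa92}), but your route --- expressing $g_{k+1}(x)/g_k(x)$ as $\rho_i^{-s}\btau_{\mathbf{u}}^{s}$ times a ratio of sums over the maximum decompositions of Lemma~\ref{lem:FaMa-fundamental}, and then bounding $|\mathbf{u}|$ by playing the strong separation constant of $F$ against the bi-Lipschitz bounds on diameters --- is essentially the standard Falconer--Marsh argument, with the minimality of the containing cylinders correctly identified and used as the key point.
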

%

An important observation is that $\{g_k\}$ form a martingale with respect to the
normalized Hausdorff measure $\SH^s$
and a suitable filtration.
By the Martingale Convergence Theorem the sequence of functions
$\{g_k\}$ converges almost everywhere with respect to $\SH^s$. However, note that
$g_k(x)$ take on only finitely many values by Lemma \ref{lem:FM-gRatio-finite}.
It follows that for almost every $x\in E$, there exists a $k_0$ such that for
$k\geq k_0$ we must have $g_k(x)=g_{k_0}(x)$. Using this result, Theorem~\ref{theo:FaMa} can be proved.

\subsection{Measure-preserving property}
In \cite{CP}, Cooper and Pignataro studied the order-preserving bi-Lipschitz functions between two dust-like Cantor subsets of $\bR$. They proved that such functions have certain measure preserving property. Xi and Ruan \cite{XiRu08} observed that this property also holds in more general case.

\begin{lemma}[\cite{CP, XiRu08}] \label{lem:XiRuan}
    There is a cylinder $E_{\bfi_0}$ and a constant $c>0$ such that $g_k(x)=c$ for all
    $x\in E_{\bfi_0}$ and $k\geq |\bfi_0|$.
\end{lemma}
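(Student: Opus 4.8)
The plan is to leverage the martingale structure of $\{g_k\}$ together with the finiteness result of Lemma~\ref{lem:FM-gRatio-finite} to pin down a single cylinder on which $g_k$ stabilizes to a constant. The key tension I want to exploit is the following: equation~\eqref{eq:mart-prop} shows that $g_k(E_\bfi)$ is a weighted average (with weights $\SH^s(E_{\bfi i})/\SH^s(E_\bfi)$ summing to $1$) of the values $g_{k+1}(E_{\bfi i})$. Meanwhile Lemma~\ref{lem:FM-gRatio-finite} says each ratio $g_{k+1}(E_{\bfi i})/g_k(E_\bfi)$ lies in a \emph{finite} set $R \subset (0,\infty)$. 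I would first record the easy consequence of the martingale/finiteness combination already sketched in the excerpt: for $\SH^s$-almost every $x$, the sequence $g_k(x)$ is eventually constant. The goal is to upgrade this from ``almost every point'' to ``every point of a fixed cylinder,'' which is what gives the clean measure-preserving statement.

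**First I would** analyze the weighted-average identity at a point where stabilization occurs. Suppose $x \in E_\bfi$ with $\bfi \in \{1,\dots,m\}^k$ and $g_{k+1}(x) = g_k(x)$, i.e. the average of the children's values equals the parent's value. Since $R$ is finite, let $r_{\min} = \min R$ and $r_{\max} = \max R$. The averaging identity forces all children ratios to be pinned: if the parent value is attained as a convex combination of child values each of which is a fixed multiple $r \in R$ of the parent value, then $\sum_i w_i r_i = 1$ where $r_i \in R$ and $w_i > 0$. This is where I want the main leverage. The natural idea is that the finiteness of $R$, combined with the fact that equality in an average of $R$-multiples can only happen in constrained ways, propagates constancy downward: once $g_{k+1}(E_\bfi) = g_k(E_\bfi)$ for the parent $\bfi$, one should be able to show $g_{k+1}(E_{\bfi i}) = g_k(E_\bfi)$ for \emph{each} child $i$, hence $g$ is locally constant below $\bfi$. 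If that downward propagation holds, then any cylinder $E_{\bfi_0}$ containing an almost-every stabilization point of sufficiently late generation will serve, with $c = g_{|\bfi_0|}(E_{\bfi_0})$.

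**The hard part will be** justifying the downward propagation cleanly, since the averaging identity $\sum_i w_i r_i = 1$ with $r_i \in R$ does not by itself force every $r_i = 1$ unless $R$ has additional structure (for instance, if $1 = \min R$ or if the only way to average elements of $R$ back to a value in the $R$-orbit of the parent is trivially). I would handle this by extracting from Lemma~\ref{lem:FM-gRatio-finite} not just finiteness of the ratios but the fact that the set of \emph{values} $\{g_k(x)\}$ is itself finite (it lies in the multiplicative span generated by $g_1$ and finitely many ratios, intersected with the range of a bi-Lipschitz-controlled quantity, which is bounded away from $0$ and $\infty$). Then I would choose, among the finitely many attained values, a value $c$ realized on a positive-measure set at which $g$ has stabilized, and take $\bfi_0$ to be a cylinder of the appropriate generation lying inside that set of stabilization with $g_{|\bfi_0|} \equiv c$. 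The remaining verification is that stabilization at the top of $E_{\bfi_0}$ forces $g_k \equiv c$ throughout $E_{\bfi_0}$ for all $k \ge |\bfi_0|$; I expect this to follow from the martingale identity by a minimality/extremality argument on the finite value set, using that a convex combination equal to the minimal (or maximal) attained value must have all terms equal to it. This extremality trick is, I believe, the crux that converts the almost-everywhere statement into the everywhere-on-a-cylinder statement, and it is the step I would spend the most care on.
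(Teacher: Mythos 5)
Your closing ``extremality trick'' is exactly the right mechanism---it is in fact the engine of the paper's own proof---but the step you hang it on is a genuine gap: the claim that the set of \emph{values} $\{g_k(x): x\in E,\ k\geq 1\}$ is finite. This does not follow from Lemma~\ref{lem:FM-gRatio-finite} together with boundedness, as you suggest: a bounded subset of the multiplicative semigroup generated by a finite set can perfectly well be infinite (think of the products $2^a 3^{-b}$, $a,b\in\bN$, lying in $[1/2,2]$). Concretely, Lemma~\ref{lem:FaMa-fundamental} gives $g_k(E_\bfi)=\frac{\btau_\bfk^s}{\brho_\bfi^s}\cdot\frac{\SH^s(F)}{\SH^s(E)}\cdot\sum_{r}(\btau_{\bfj_r})^s$; the sum factor takes finitely many values, but finiteness of the ratios $\btau_\bfk^s/\brho_\bfi^s$ is precisely the content of Lemma~\ref{lem:HM-Ratio-Finite}, which in this paper is stated only for cylinders \emph{inside a stable cylinder} and is deduced \emph{from} Lemma~\ref{lem:XiRuan} via the measure-preserving property \eqref{3.5}. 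So invoking value-finiteness here is circular (or at best unproved); and without it an attained maximal value need not exist, your a.e.-stabilization points give no control over the values on \emph{sibling} cylinders below them, and the downward propagation you need cannot start.

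The paper repairs exactly this defect with an approximate-maximum device, and needs neither the martingale convergence theorem nor any finiteness of values. Set $T=\sup_{k\geq 1}\max_{|\bfi|=k}g_k(E_\bfi)<\infty$ (bi-Lipschitz bound); if every ratio $g_{k+1}(x)/g_k(x)$ equals $1$ the lemma is trivial, and otherwise let $\delta>0$ be the least nonzero value of $|g_{k+1}(x)/g_k(x)-1|$, which exists by Lemma~\ref{lem:FM-gRatio-finite} (finiteness of \emph{ratios} is all that is used). Choose $\bfi_0$ with $g_\ell(E_{\bfi_0})>T/(1+\delta)$, where $\ell=|\bfi_0|$. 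Then each child ratio $g_{\ell+1}(E_{\bfi_0 j})/g_\ell(E_{\bfi_0})$ is $<1+\delta$ (else the child's value would exceed $T$), hence $\leq 1$ by the definition of $\delta$, hence $=1$ for \emph{all} $j$ by the averaging identity \eqref{eq:mart-prop}---your convex-combination argument, applied at the near-supremum rather than at an attained maximum. Each child then satisfies the same defining inequality, and induction gives $g_k\equiv c:=g_\ell(E_{\bfi_0})$ on $E_{\bfi_0}$ for all $k\geq\ell$. If you replace your finiteness-of-values step by this $T/(1+\delta)$ selection, your outline becomes the paper's proof.
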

\begin{proof} Set $T=\sup_{k\geq 1} \max_{|\bfi|=k}   g_k(E_{\bfi})$.
Since $f$ is bi-Lipschitz, we have $T<+\infty$.

If $\frac{g_{k+1}(x)}{g_k(x)}=1$ for all $x\in E$ and all $k\geq 1$,
then the lemma clearly holds. Otherwise set $\delta=\min
\left(\left\{|\frac{g_{k+1}(x)}{g_k(x)}-1|: ~x\in E, k\geq
1\right\}\setminus\{0\}\right)$. Then  $\delta>0$ by
Lemma~\ref{lem:FM-gRatio-finite}. Choose ${\bfi_0}$  such that
(with $\ell=|{\bfi_0}|$)
\begin{equation}   \label{eq:mart-Ei0}
     g_{\ell}(E_{{\bfi_0}})>T/(1+\delta).
\end{equation}
Then $\frac{g_{\ell+1}(E_{{\bfi_0}
j})}{g_\ell(E_{{\bfi_0}})}<1+\delta$ for all $j$ and hence
$\frac{g_{\ell+1}(E_{{\bfi_0} j})}{g_\ell(E_{{\bfi_0}})}\leq 1$ by
the definition of $\delta$.

Now  formula~(\ref{eq:mart-prop}) implies that
$\frac{g_{\ell+1}(E_{{\bfi_0} j})}{g_\ell(E_{{\bfi_0}})}= 1$ for all
$j$. Hence each $E_{{\bfi_0} j}$  satisfies (\ref{eq:mart-Ei0}) and
we can repeat the same argument with $E_{{\bfi_0} j}$ in place of
$E_{\bfi_0}$. Set $c=g_\ell(E_{{\bfi_0}})$ and the lemma is proved.
\end{proof}

This lemma means that the restriction of $f$ on $E_{\bfi_0}$ is
measure-preserving up to a constant. More precisely for any Borel
set $A\subset E_{\bfi_0}$ we have

\begin{equation}  \label{3.5}
    \frac{\SH^s(f(A))}{\SH^s(A)}=c=\frac{\SH^s(f(E_{\bfi_0}))}{\SH^s(E_{\bfi_0})}.
\end{equation}

We shall call any such cylinder $E_{\bfi_0}$ in Lemma~\ref{lem:XiRuan} a
\emph{stable cylinder} with respect to the map $f$. In the rest of this section we
fix a stable cylinder $E_{\bfi_0}$. Going back to
Lemma~\ref{lem:FaMa-fundamental} and Remark~\ref{rem:for-FM-fund},
for any $\bfi\in \Sigma_m^*$, there is a (unique) maximum
decomposition of $f(E_{\bfi_0\bfi})$ with respect to $F$ and $n_0$:
$$
   f(E_{\bfi_0\bfi})=\bigcup_{r=1}^{p_{\bfi_0\bfi}} F_{\bfk\bfj_r},
$$
where  $|\bfj_r|= n_0$. This allows us to prove the following observation, which serves
as a key result in the development of the {\em matchable condition} technique in \cite{RRW12}.

\begin{lemma}[\cite{RRW12}]  \label{lem:HM-Ratio-Finite}
  The set  $
      \M=\bigcup_{\bfi\in \Sigma_m^*}  \Bigl\{\frac{\SH^s(E_{\bfi_0\bfi})}{\SH^s(F_{\bfk\bfj_r})}:
        1\leq r\leq p_{\bfi_0\bfi}~\Bigr\}
  $  is finite. Consequently, the sets
  $$   \M'=\bigcup_{\bfi\in \Sigma_m^*}  \Bigl\{\frac{\diam E_{\bfi_0\bfi}}{\diam F_{\bfk\bfj_r}}:
        1\leq r\leq p_{\bfi_0\bfi}~\Bigr\} \;\; \mbox{and}\;\;
        \M''=\bigcup_{\bfi\in \Sigma_m^*}  \Bigl\{\frac{\brho_{\bfi_0\bfi}}{\btau_{\bfk\bfj_r}}:
        1\leq r\leq p_{\bfi_0\bfi}~\Bigr\}
  $$
  are finite.
\end{lemma}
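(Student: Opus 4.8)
The plan is to use the stable cylinder property~(\ref{3.5}) to strip away all the unbounded ``depth'' data and reduce the ratios to expressions involving only the bounded suffix words $\bfj_r$, each of which has length exactly $n_0$. First I would note that $E_{\bfi_0\bfi}\subset E_{\bfi_0}$ is a Borel set, so~(\ref{3.5}) gives $\SH^s(f(E_{\bfi_0\bfi}))=c\,\SH^s(E_{\bfi_0\bfi})$. On the other hand, the maximum decomposition $f(E_{\bfi_0\bfi})=\bigcup_{r=1}^{p_{\bfi_0\bfi}}F_{\bfk\bfj_r}$ is a disjoint union, so additivity of $\SH^s$ together with the scaling relation $\SH^s(F_{\bfk\bfj_r})=\btau_{\bfk}^s\,\btau_{\bfj_r}^s\,\SH^s(F)$ yields $c\,\SH^s(E_{\bfi_0\bfi})=\btau_{\bfk}^s\SH^s(F)\sum_{r'=1}^{p_{\bfi_0\bfi}}\btau_{\bfj_{r'}}^s$, the common factor $\btau_\bfk^s\SH^s(F)$ having been pulled out.

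The key observation is that $\btau_\bfk^s$, which is the only factor depending on the unbounded prefix $\bfk$, cancels when one forms the ratio:
$$\frac{\SH^s(E_{\bfi_0\bfi})}{\SH^s(F_{\bfk\bfj_r})}=\frac{\btau_\bfk^s\SH^s(F)\,c^{-1}\sum_{r'}\btau_{\bfj_{r'}}^s}{\btau_\bfk^s\,\btau_{\bfj_r}^s\,\SH^s(F)}=\frac{1}{c\,\btau_{\bfj_r}^s}\sum_{r'=1}^{p_{\bfi_0\bfi}}\btau_{\bfj_{r'}}^s.$$
Now every $\bfj_{r'}$ has length exactly $n_0$, so $\btau_{\bfj_r}^s$ ranges over the finite set $\{\btau_\bfw^s:|\bfw|=n_0\}$, and since the cylinders $F_{\bfk\bfj_{r'}}$ are disjoint the words $\bfj_{r'}$ are distinct, so the numerator is a sum of distinct elements of this same finite set. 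As there are only $n^{n_0}$ words of length $n_0$, hence only finitely many sub-collections of them, the numerator takes finitely many values; with $c$ fixed, the whole expression takes finitely many values, which proves $\M$ is finite.

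For the two consequences I would pass between the three kinds of ratios using the elementary scaling identities $\SH^s(E_\bfi)=\brho_\bfi^s\,\SH^s(E)$ and $\diam E_\bfi=\brho_\bfi\,\diam E$ (and likewise for $F$, noting $\psi_{\bfk\bfj_r}$ is a similarity of ratio $\btau_\bfk\btau_{\bfj_r}$). These give $\frac{\brho_{\bfi_0\bfi}^s}{\btau_{\bfk\bfj_r}^s}=\frac{\SH^s(F)}{\SH^s(E)}\cdot\frac{\SH^s(E_{\bfi_0\bfi})}{\SH^s(F_{\bfk\bfj_r})}$, so finiteness of $\M$ forces the set of values $\brho_{\bfi_0\bfi}^s/\btau_{\bfk\bfj_r}^s$ to be finite; applying the injective map $x\mapsto x^{1/s}$ gives finiteness of $\M''$, and multiplying by the fixed constant $\diam E/\diam F$ gives finiteness of $\M'$.

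The whole argument is essentially a single cancellation once the measure-preserving identity is available, so I do not anticipate a serious obstacle; the one point that genuinely matters is the uniform length condition $|\bfj_r|=n_0$ supplied by Remark~\ref{rem:for-FM-fund}, without which the numerator and denominator could not be confined to a fixed finite alphabet of suffix words and the set could fail to be finite. The only routine bookkeeping is to confirm the scaling relations for $\SH^s$ and $\diam$ on the cylinders $F_{\bfk\bfj_r}$ and that the disjointness justifies additivity of $\SH^s$.
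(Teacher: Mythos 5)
Your proof is correct and follows essentially the argument the paper intends (the one from \cite{RRW12}): the measure-preserving property (\ref{3.5}) of the stable cylinder, combined with additivity of $\SH^s$ over the maximum decomposition, makes the prefix factor $\btau_{\bfk}^s$ cancel, leaving an expression determined by the words $\bfj_r$ of fixed length $n_0$, of which there are only finitely many (and finitely many sub-collections). The deduction of finiteness of $\M''$ and $\M'$ from that of $\M$ via the scaling identities for $\SH^s$ and $\diam$ is exactly the standard step.
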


\subsection{Pseudo-basis and distance function.}
The recent paper \cite{RRW12} introduced several techniques such as
pseudo-basis, distance function and matchable relation. These techniques
allowed us to prove several theorems that could not be obtained using the
classical techniques.

The  paper \cite{RRW12} considered the notion of \emph{rank} for a
contraction vector $\brho=(\rho_1,\ldots,\rho_m)$. Let $\langle
\rho_1,\dots, \rho_m \rangle$ denote the subgroup of
$(\bR^+,\times)$ generated by $\rho_1$, $\dots$, $\rho_m$, it
is a free abelian group. It follows that $\langle \rho_1,\dots,
\rho_m \rangle$ has a nonempty basis and we can define the rank of
$\langle \rho_1,\dots, \rho_m \rangle$, denoted by
$\rank\langle \brho \rangle$, to be the cardinality of the basis.
Clearly $1\leq \rank\langle \brho \rangle \leq m$. If
$\rank\langle \brho \rangle=m$, we say $\brho$ has full rank. For more about the
rank of a free abelian group see e.g. \cite{Hun80}.

According to Theorem \ref{theo:FaMa} (2), if $\SD(\brho)\sim
\SD(\btau)$, then
$\rank\langle\brho\rangle=\rank\langle\btau\rangle=\rank\langle\brho,\btau\rangle$,
where
$\langle\brho,\btau\rangle:=\langle\rho_1,\ldots,\rho_m,\tau_1,\ldots,\tau_n\rangle$.

We call $w_1, \dots, w_L$ a \emph{pseudo-basis} of
$V=\langle \brho,\btau\rangle$ if $L=\rank\, V$ and $V\subseteq \langle
w_1,\ldots,w_L\rangle$.
It is clear that a basis of $V$ is natural to be a pseudo-basis. For any $x_1,x_2\in V$, we define
their distance with respect to the pesudo-basis $w_1,\ldots,w_L$ by
\begin{equation}\label{eq:def-h}
     h(x_1, x_2) := \sqrt{\sum_{j=1}^L(s_j-t_j)^2},
\end{equation}
where $s_j,t_j\in\bZ$ are the unique integers such that $x_1=
\prod_{j=1}^L w_j^{s_j}$, $x_2= \prod_{j=1}^L w_j^{t_j}$.

\begin{rem}\label{rem:distance}
\textnormal{It is easy
to show that if $h_1$ and $h_2$ are distances on $V$ with respect to two different pseudo-bases, then they are comparable, i.e., there exists a constant
$C\geq 1$ such that
$$
  C^{-1} h_1(x_1,x_2) \leq h_2(x_1,x_2) \leq C h_1(x_1,x_2), \quad
  \forall x_1,x_2\in V.
$$
}
\end{rem}

Let $\brho_{\max} = \max\{\rho_1, \dots, \rho_m\}$ and
$\brho_{\min} = \min\{\rho_1, \dots, \rho_m\}$. For any $t\in(0,1)$
let
$$
\SW(E, t):=\{\bfi\in\Sigma_n^*:  \brho_\bfi \leq t<\brho_{\bfi^*}\},
$$
where $\bfi^*$ is the word obtained by deleting the last letter of
$\bfi$, i.e., $\bfi^*=i_1\cdots i_{k-1}$ if $\bfi=i_1\cdots i_k$. We
define $\brho_{\bfi^*}=1$ if the length of $\bfi$ equals $1$.
Similarly, we may define $\SW(F, t)$ with respect to its contraction
vector $\btau$.

Pick some $\bfi\in\Sigma_m^*$. There is a (unique) maximum
decomposition of $f(E_\bfi)$ with respect to $F$ and $n_0$:
$$
   f(E_\bfi)=\bigcup_{r=1}^{p_\bfi} F_{\bfk\bfj_r},
$$
where  $|\bfj_r|= n_0$. We define a relation $\SR(\bfi,t,f)\subset
{\SW}(E, t) \times {\SW}(F, t)$ by
\begin{equation}\label{3.6}
    \SR(\bfi,t,f):=\left \{(\bfi',\bfj')\in {\SW}(E, t) \times
    {\SW}(F, t):~f(E_{\bfi\bfi'})\cap \bigcup_{r=1}^{p_\bfi} F_{\bfk\bfj_r\bfj'}
    \neq \emptyset. \right \}.
\end{equation}

It is shown in \cite{RRW12} that

\begin{theorem}[\cite{RRW12}]    \label{theo:new-criterion}
     Assume that $f:~E\lra F$ is bi-Lipschitz and let
     $E_{\bfi_0}$ be a stable cylinder for some $\bfi_0\in\Sigma_m^*$.
     Let $h$ be a distance on $V=\langle\brho,\btau\rangle$ defined by (\ref{eq:def-h}).
     Then there exists a constant $M_0>0$ such that for any $t\in(0,1)$ we have
     \begin{itemize}
     \item[\rm (1)] For any  $\bfi\in \SW(E, t)$,
     \begin{equation} \label{MM}
     1\leq  \card \{\bfj:~(\bfi,\bfj)\in\SR(\bfi_0,t,f)\}\leq M_0.
     \end{equation}
     Similarly,  for any
           $\bfj\in \SW(F, t)$, $1\leq  \card \{\bfi:~(\bfi,\bfj)\in\SR(\bfi_0,t,f)\}\leq M_0.$
     \item[\rm (2)]~~If $(\bfi,\bfj)\in\SR(\bfi_0,t,f)$ then $h(\brho_\bfi,\btau_\bfj)\leq M_0$.
     \end{itemize}
\end{theorem}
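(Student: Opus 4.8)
The plan is to combine three standard structural facts about dust-like attractors with the measure-preserving property of the stable cylinder. First I would record the facts used repeatedly: (a) both $E$ and $F$ are Ahlfors $s$-regular, so that $\SH^s(B(y,r)\cap F)\asymp r^s$ for $y\in F$ and $0<r\le \diam F$, and likewise for $E$; (b) any two cylinders of $F$ are either nested or disjoint, since distinct children of a common cylinder are separated under the SSC; and (c) the cut-set bounds $\brho_{\min}\,t<\brho_\bfi\le t$ for $\bfi\in\SW(E,t)$, with $\btau_\bfj\asymp t$ analogously for $\bfj\in\SW(F,t)$. I would also note at the outset that, writing $f(E_{\bfi_0})=\bigcup_r F_{\bfk\bfj_r}$ for the maximum decomposition with $|\bfj_r|=n_0$, Lemma~\ref{lem:HM-Ratio-Finite} applied to the empty word gives $\brho_{\bfi_0}/\btau_{\bfk\bfj_r}\in\M''$; together with $|\bfj_r|=n_0$ this shows both that $\btau_{\bfk\bfj_r}\asymp\brho_{\bfi_0}$ and that these ratios take only finitely many values, all independent of $t$.

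For part (1), the lower bound $\ge 1$ is immediate: since $\SW(E,t)$ and $\SW(F,t)$ are cut sets, $f(E_{\bfi_0})$ is tiled both by the images $f(E_{\bfi_0\bfi})$ and by the cylinders $F_{\bfk\bfj_r\bfj'}$, so every nonempty piece of one family meets a piece of the other. For the upper bound I would argue by $\SH^s$-volume. Fixing $\bfi\in\SW(E,t)$, the image $f(E_{\bfi_0\bfi})$ has diameter $\lesssim\brho_{\bfi_0}t$, while every $F_{\bfk\bfj_r\bfj'}$ meeting it is one of a family of pairwise disjoint cylinders of diameter $\asymp\brho_{\bfi_0}t$ and measure $\asymp(\brho_{\bfi_0}t)^s$; all of them lie in a ball of radius $\asymp\brho_{\bfi_0}t$ about a point of $f(E_{\bfi_0\bfi})$, so Ahlfors regularity of $F$ caps their number, hence the number of distinct $\bfj'$, by an absolute constant. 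The symmetric bound uses $f^{-1}$ and the measure-preserving identity~(\ref{3.5}): for fixed $\bfj$ each of the at most $n^{n_0}$ preimages $f^{-1}(F_{\bfk\bfj_r\bfj'})\subset E_{\bfi_0}$ has measure $c^{-1}\SH^s(F_{\bfk\bfj_r\bfj'})\asymp(\brho_{\bfi_0}t)^s$ and diameter $\lesssim\brho_{\bfi_0}t$, and the disjoint cylinders $E_{\bfi_0\bfi}$ meeting it are counted in the same way using Ahlfors regularity of $E$, then summed over the boundedly many $r$.

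For part (2) the naive estimate only yields that the ratio $\brho_\bfi/\btau_\bfj$ lies in a fixed bounded interval, and this is the main obstacle: a finitely generated subgroup of $(\bR^+,\times)$ can be dense, so a bounded ratio need not have a bounded exponent vector and hence need not bound $h$. The resolution is to upgrade \emph{bounded} to \emph{finitely many values}. I would take the maximum decomposition $f(E_{\bfi_0\bfi})=\bigcup_l F_{\bfk'\bfw_l}$ with $|\bfw_l|=n_0$, so that Lemma~\ref{lem:HM-Ratio-Finite} places $\brho_{\bfi_0\bfi}/\btau_{\bfk'}$ in a finite set. Since $(\bfi,\bfj)\in\SR(\bfi_0,t,f)$ means $f(E_{\bfi_0\bfi})$ meets some $F_{\bfk\bfj_r\bfj'}$ while being contained in $F_{\bfk'}$, the cylinders $F_{\bfk'}$ and $F_{\bfk\bfj_r\bfj'}$ intersect; by fact (b) the longer word extends the shorter by a suffix, and since both cylinders have diameter $\asymp\brho_{\bfi_0}t$ that suffix has bounded length, so $\btau_{\bfk'}/\btau_{\bfk\bfj_r\bfj'}$ lies in a finite set. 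Chaining these finite sets together with $\brho_{\bfi_0}/\btau_{\bfk\bfj_r}\in\M''$ and $|\bfj_r|=|\bfw_l|=n_0$ gives $\brho_\bfi/\btau_\bfj=(\brho_{\bfi_0\bfi}/\btau_{\bfk'})(\btau_{\bfk'}/\btau_{\bfk\bfj_r\bfj'})(\btau_{\bfk\bfj_r}/\brho_{\bfi_0})\btau_{\bfj_r}$ lying in a finite subset of $V$ independent of $t$. Finally, a finite set of elements of $V$ has uniformly bounded exponent vectors in any fixed pseudo-basis, so $h(\brho_\bfi,\btau_\bfj)$ is bounded; enlarging $M_0$ to also dominate the counting bounds of part (1) completes the proof, all constants depending only on $f$, the two IFSs, $n_0$ and the fixed finite sets, hence uniform in $t$.
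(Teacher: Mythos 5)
The survey itself does not prove Theorem \ref{theo:new-criterion} --- it is quoted from \cite{RRW12} --- so your argument can only be measured against the ingredients the survey assembles for it, and you use exactly those ingredients in the intended way: part (1) by volume counting (Ahlfors regularity plus disjointness of comparably sized cylinders, with the lower bound coming from the cut-set tilings), and part (2) by upgrading ``bounded ratio'' to ``finitely many ratios'' via Lemma \ref{lem:HM-Ratio-Finite}, which is precisely why the survey singles out that lemma as the key to the matchable-condition technique; this also matches the route taken in \cite{RRW12}. Your observation that mere boundedness of $\brho_\bfi/\btau_\bfj$ cannot bound $h$, because $\langle\brho,\btau\rangle$ may be dense in $(\bR^+,\times)$, correctly identifies the real difficulty, and your nesting-plus-comparable-diameter argument resolves it soundly. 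One algebraic slip: the product $(\brho_{\bfi_0\bfi}/\btau_{\bfk'})(\btau_{\bfk'}/\btau_{\bfk\bfj_r\bfj})(\btau_{\bfk\bfj_r}/\brho_{\bfi_0})$ already equals $\brho_\bfi/\btau_\bfj$, so your extra factor $\btau_{\bfj_r}$ makes the displayed identity false as written --- harmlessly, since $\btau_{\bfj_r}$ ranges over the fixed finite set of length-$n_0$ words and membership in a finite set independent of $t$ is all you use. Two further remarks: your appeal to equation (\ref{3.5}) in part (1) is superfluous (only the diameter of the preimage is needed, and the cylinders $E_{\bfi_0\bfi}$ carry their own measure), as is the appeal to Lemma \ref{lem:HM-Ratio-Finite} for the decomposition of $f(E_{\bfi_0})$ itself, which is a single fixed decomposition once $\bfi_0$ is fixed.
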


\subsection{Matchable condition.} One of the most important techniques introduced
in \cite{RRW12} is the {\em matchable relation}. It is also one of the more
technical ones. Let $E$ and $F$ be two dust-like self-similar sets
with contraction vectors $\brho$ and $\btau$ respectively. Let $h$
be a distance on $V=\langle\brho,\btau\rangle$ defined by
(\ref{eq:def-h}).

Let $M_0$ be a constant. For $t\in(0,1)$, a relation $\SR\subset
\SW(E,t)\times \SW(F,t)$ is said to be \emph{$(M_0,h)$-matchable},
or simply {\em $M_0$-matchable} when there is no confusion,  if
\begin{itemize}
\item[(i)] $1\leq  \card \{\bfj:~(\bfi,\bfj)\in {\SR}\}\leq M_0 $ for any
$\bfi\in \SW(E,t)$, and $1\leq  \card \{\bfi:~(\bfi,\bfj)\in
\SR\}\leq M_0 $ for any $\bfj\in  \SW(F,t)$.
\item[(ii)]  If $(\bfi,\bfj)\in {\SR}$, then $h(\brho_i,\btau_j)\leq
M_0$.
\end{itemize}
We also say that $\SW(E,t)$ and $\SW(F,t)$ are
\emph{$(M_0,h)$-matchable}, or {\em $M_0$-matchable} when there exists a
$(M_0,h)$-matchable relation $\SR\subset \SW(E,t)\times \SW(F,t)$.

\begin{definition}\textnormal{
We shall call two self-similar sets  $E$ and $F$ are {\em
matchable}, if there exists a constant $M_0$ such that for any
$t\in(0,1)$, $\SW(E,t)$ and $\SW(F,t)$  are $M_0$-matchable.}
\end{definition}

We remark that the matchable property does not depend on the choice
of pseudo-basis of $\langle\brho,\btau\rangle$.
Obviously Theorem \ref{theo:new-criterion} implies the following result:

\begin{theorem}[\cite{RRW12}] \label{Match}  Let $E$ and $F$ be two dust-like self-similar sets.
If $E\sim F$, then $E$ and $F$ are matchable.
\end{theorem}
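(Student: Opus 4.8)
The plan is to derive Theorem~\ref{Match} directly from the new criterion Theorem~\ref{theo:new-criterion}. The hypothesis $E\sim F$ means there is a bi-Lipschitz bijection $f:E\lra F$, so I may invoke the entire machinery of the preceding subsections. First I would fix, once and for all, a stable cylinder $E_{\bfi_0}$ whose existence is guaranteed by Lemma~\ref{lem:XiRuan} (the measure-preserving property), and fix a distance $h$ on $V=\langle\brho,\btau\rangle$ coming from a pseudo-basis via (\ref{eq:def-h}). With these choices in hand, Theorem~\ref{theo:new-criterion} produces a single constant $M_0>0$ that works \emph{simultaneously} for every $t\in(0,1)$. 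This uniformity in $t$ is exactly what the definition of matchable demands, so the structural core of the argument is already supplied by the earlier theorem.

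The second step is to recognize that the relation $\SR(\bfi_0,t,f)\subset \SW(E,t)\times\SW(F,t)$ defined in (\ref{3.6}) is the candidate matchable relation. I would check each clause of the definition of $(M_0,h)$-matchable against the two parts of Theorem~\ref{theo:new-criterion}. Condition~(i), namely that every $\bfi\in\SW(E,t)$ has between $1$ and $M_0$ partners and symmetrically every $\bfj\in\SW(F,t)$ has between $1$ and $M_0$ partners, is literally the content of Theorem~\ref{theo:new-criterion}(1), displayed in (\ref{MM}) together with its stated symmetric counterpart. Condition~(ii), that $(\bfi,\bfj)\in\SR$ forces $h(\brho_\bfi,\btau_\bfj)\leq M_0$, is precisely Theorem~\ref{theo:new-criterion}(2). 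Hence $\SR(\bfi_0,t,f)$ is $(M_0,h)$-matchable for this particular $t$.

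Since the same $M_0$ and the same $h$ serve for all $t\in(0,1)$, I would conclude that for every $t$ the pair $\SW(E,t)$ and $\SW(F,t)$ is $M_0$-matchable, which is the definition of $E$ and $F$ being matchable. A closing remark would note that the choice of pseudo-basis is immaterial: by Remark~\ref{rem:distance} any two distances $h_1,h_2$ are comparable up to a multiplicative constant, so replacing $h$ by another distance merely rescales $M_0$ and preserves matchability. I do not expect a genuine obstacle here, since Theorem~\ref{theo:new-criterion} has done all the quantitative work; the one point requiring care is to confirm that the constant $M_0$ delivered by that theorem is independent of $t$ (it is, being stated as ``there exists a constant $M_0>0$ such that for any $t\in(0,1)$''), because it is exactly this uniformity that upgrades the $t$-by-$t$ matchability of the relations into the global matchable property of the sets.
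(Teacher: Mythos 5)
Your proposal is correct and takes essentially the same approach as the paper, which simply asserts that ``Obviously Theorem~\ref{theo:new-criterion} implies the following result'': with a stable cylinder $E_{\bfi_0}$ from Lemma~\ref{lem:XiRuan} fixed, the relation $\SR(\bfi_0,t,f)$ of (\ref{3.6}) is the required $(M_0,h)$-matchable relation, with $M_0$ uniform in $t\in(0,1)$. Your write-up just spells out in detail the implication the paper leaves as obvious, including the correct observation (via Remark~\ref{rem:distance}) that the choice of pseudo-basis is immaterial.
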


\section{Recent Results on dust-like self-similar sets}

The techniques developed in Falconer and Marsh \cite{FaMa92} had led to
some fundamental theorems on the Lipschitz equivalence of dust-like Cantor sets, such as
Theorem \ref{theo:FaMa}. However, to further advance the field these techniques are
clearly not sufficient. As a result there has not been much significant progress until recently,
when several new results on the Lipschitz equivalence of
dust-like Cantors sets were established in \cite{RRW12,Xi10,XiRu08}. In particular,
the equivalence of several classes have been completely characterized in \cite{RRW12}.
These results, which we shall state here, are based on
the new techniques outlined in the previous section. As an important observation,
a common theme among these
results is the link between Lipschitz equivalence and the algebraic properties of
the contractions.


One of the new results on the equivalence of two dust-like Cantor sets concerns
the special case where one of the
contraction vectors has full rank. Lipschitz equivalence in
this setting forces strong rigidity on the contraction vectors. The following result
is derived by using the distance function and Theorem \ref{Match}.

\begin{theorem}[\cite{RRW12}] \label{theo:rankFull}
Let $\brho=(\rho_1,\ldots,\rho_m)$ and $\btau=(\tau_1, \ldots,
\tau_m)$ be two contraction vectors with  $\rank\langle
\brho\rangle=m$. Then   $\SD(\brho)$ and $\SD(\btau)$ are Lipschitz
equivalent if and only if $\btau$ is a permutation of $\brho$.
\end{theorem}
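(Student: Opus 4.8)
The plan is to prove both directions, with the hard direction being that Lipschitz equivalence forces $\btau$ to be a permutation of $\brho$. The converse is immediate: if $\btau$ is a permutation of $\brho$, then any IFS realizing $\SD(\btau)$ can be reindexed to an IFS with contraction vector $\brho$, so $\SD(\brho)$ and $\SD(\btau)$ share a common element, hence are Lipschitz equivalent by Proposition~\ref{prop-1.1}. The content of the theorem lies in the forward direction, and the strategy is to exploit the full-rank hypothesis together with the matchable condition from Theorem~\ref{Match}.

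First I would record the algebraic constraints supplied by Theorem~\ref{theo:FaMa}. Since $\SD(\brho)\sim\SD(\btau)$, part~(2) of that theorem together with the remark following Theorem~\ref{theo:new-criterion} gives $\rank\langle\brho\rangle=\rank\langle\btau\rangle=\rank\langle\brho,\btau\rangle=m$. Thus $\langle\brho,\btau\rangle$ is a free abelian group of rank $m$, and because $\rank\langle\brho\rangle=m$ with $\brho$ having exactly $m$ entries, the elements $\rho_1,\ldots,\rho_m$ are multiplicatively independent and form a pseudo-basis (indeed a basis after passing to a finite-index overgroup) of $V=\langle\brho,\btau\rangle$. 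This is the crucial leverage of full rank: the distance function $h$ on $V$ can be computed in essentially the coordinates given by $\brho$, so geometric information translates directly into the integer exponent vectors of words.

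The core of the argument is then a counting/growth estimate applied to the matchable relation. Fixing a stable cylinder and invoking Theorem~\ref{Match}, for every $t\in(0,1)$ the sets $\SW(E,t)$ and $\SW(F,t)$ are $M_0$-matchable, so $\card\SW(E,t)$ and $\card\SW(F,t)$ differ by at most the bounded multiplicative factor $M_0$. Using the full-rank coordinates, I would show that the bounded-distance condition $h(\brho_\bfi,\btau_\bfj)\le M_0$ in a matchable pair forces $\btau_\bfj$ to have exponent vector (in the $\brho$-basis) lying within a bounded box of that of $\brho_\bfi$; letting $t\to 0$ and comparing the asymptotic growth rates of $\card\SW(E,t)$ and $\card\SW(F,t)$, which are governed by the multiset $\{\rho_j\}$ versus $\{\tau_j\}$, one deduces that the two contraction vectors must generate the same finitely generated semigroup behavior near the identity. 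Concretely, each $\tau_i$ must itself be a single generator $\rho_{\sigma(i)}$ rather than a genuine product, for otherwise the exponent vectors would drift unboundedly as $t\to 0$, violating condition~(ii) of matchability.

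The main obstacle I anticipate is the last step: ruling out that some $\tau_i$ is a nontrivial power-product of several $\rho_j$ (or a proper root-type relation). The matchable condition only bounds $h$ on pairs that actually occur in $\SR$, so I must argue that for small $t$ the decompositions of words in $\SW(E,t)$ and $\SW(F,t)$ are forced to pair up in a way that tracks individual generators; this requires combining the bounded cardinality bound~(\ref{MM}) with the distance bound~(2) to pin down a bijective matching and then a counting identity among the generators. Once one shows the generator multisets $\{\rho_1,\ldots,\rho_m\}$ and $\{\tau_1,\ldots,\tau_m\}$ coincide as multisets, the permutation $\sigma$ with $\tau_i=\rho_{\sigma(i)}$ is exactly the claimed conclusion, and the equality of dimensions $s$ is then automatic and consistent.
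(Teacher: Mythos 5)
You are using exactly the tools the survey points to for this theorem (the distance function on $V=\langle\brho,\btau\rangle$ together with Theorem~\ref{Match}), your converse direction is fine, and your setup is correct: Falconer--Marsh gives $\rank\langle\brho\rangle=\rank\langle\btau\rangle=\rank\langle\brho,\btau\rangle=m$, so for some $N\in\bZ^+$ the vector $(\rho_1^{1/N},\ldots,\rho_m^{1/N})$ is a pseudo-basis of $V$ and each $\tau_i=\prod_j\rho_j^{b_{ij}/N}$ with $b_{ij}\in\bZ$. Moreover, your ``drift'' idea does prove something real when applied to the constant words $[i]^k$ on the $F$ side and $[j]^k$ on the $E$ side as $k\to\infty$: it forces each exponent vector $\beta_i=(b_{i1},\ldots,b_{im})$ to be nonnegative and forces the cone generated by $\beta_1,\ldots,\beta_m$ to contain every coordinate ray, so with only $m$ generators available each $\tau_i$ must be a rational power of a distinct $\rho_{\sigma(i)}$, say $\tau_{\sigma(j)}=\rho_j^{\gamma_j}$ with $\gamma_j\in\bQ^+$; the dimension identity $\sum_j\rho_j^s=\sum_j\rho_j^{\gamma_j s}=1$ then excludes the cases ``all $\gamma_j\geq 1$'' and ``all $\gamma_j\leq 1$''.

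The genuine gap is the remaining mixed root-type case --- exactly the one you flag --- and neither of your proposed mechanisms closes it. Comparing the growth of $\card\SW(E,t)$ and $\card\SW(F,t)$ gives nothing: both are $\asymp t^{-s}$ because the dimensions already agree. And there is no unbounded drift to exploit: consider $\btau=(\rho_1^{2},\rho_2^{1/2})$, where mixed exponents are compatible with the dimension identity ($\rho_1^{2s}+\rho_2^{s/2}=\rho_1^{s}+\rho_2^{s}=1$ has solutions). In the pseudo-basis $(\rho_1,\rho_2^{1/2})$ an $E$-word with letter counts $(a_1,a_2)$ has exponent vector $(a_1,2a_2)$, and an $F$-word with letter counts $(\lfloor a_1/2\rfloor,2a_2)$ has exponent vector within distance $2$ of it, so condition (ii) of matchability can always be met pointwise and your argument yields no contradiction. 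The contradiction must come from \emph{multiplicities}, i.e.\ from using conditions (i) and (ii) jointly with a count of how many words share each exponent vector, and this step is absent from your sketch. Since $\brho$ has full rank, the words of $\SW(E,t)$ with a common value $\brho_\bfi$ are precisely those with a common letter-count vector $a$, and there are multinomially many of them, $\binom{|a|}{a_1,\ldots,a_m}$ with $|a|=a_1+\cdots+a_m$; condition (ii) confines all their partners to boundedly many count vectors $c\approx(a_j/\gamma_j)_j$, so condition (i) forces the multinomial counts on the two sides to be comparable, and by Stirling their exponential rates must agree along every direction $a/|a|$ --- which fails in some direction unless every $\gamma_j=1$. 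In the example above, $a=(k,k)$ gives $\binom{2k}{k}\geq 4^k/(2k+1)$ words on the $E$ side but at most $Ce^{1.26k}$ admissible partners on the $F$ side, so no $M_0$-matchable relation can exist for small $t$. This counting argument is the heart of the proof in \cite{RRW12}; without it the theorem is not proved.
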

If the length of $\btau$ is not equal to $m$ then the characterization of $\btau$ is open.
We make the following conjecture:

\begin{conj}
Let $\brho=(\rho_1,\ldots,\rho_m)$ such that  $\rank\langle
\brho\rangle=m$. Assume that $\btau=(\tau_1, \ldots,
\tau_n)$. Then $\SD(\brho)$ and $\SD(\btau)$ are Lipschitz
equivalent if and only if $\btau$ is derived from $\brho$.
\end{conj}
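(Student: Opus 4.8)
Since the statement is presented as a conjecture, I can only outline an attack and flag where it stalls. The easy direction is immediate: if $\btau$ is derived from $\brho$ then $\brho\sim\btau$ in a single step, so $\SD(\brho)\bleq\SD(\btau)$ by Proposition~\ref{prop-1.3}. The content is the converse, and the plan is to leverage the full rank hypothesis to pin down $\btau$ completely. Assume $E\in\SD(\brho)$ and $F\in\SD(\btau)$ are Lipschitz equivalent with $\rank\langle\brho\rangle=m$. First I would record the rank bookkeeping supplied by Theorem~\ref{theo:FaMa}(2): it forces $\rank\langle\btau\rangle=\rank\langle\brho,\btau\rangle=m$, so that $H:=\langle\brho\rangle$ is a subgroup of finite index $d$ in $V:=\langle\brho,\btau\rangle$, both free abelian of rank $m$. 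Because $\brho$ has full rank, $\rho_1,\dots,\rho_m$ is an honest basis of $H$, and every element of $H$ has a unique representation $\prod_j\rho_j^{c_j}$ with $c_j\in\bZ$. I would then reduce ``$\btau$ is derived from $\brho$'' to two claims: (B1) each $\tau_i\in\sgp(\brho)$, i.e. $\tau_i=\brho_{\bfj_i}$ for some word $\bfj_i\in\Sigma_m^*$; and (B2) the multiset $\{\bfj_1,\dots,\bfj_n\}$ can be realized as a genuine cut set of $\Sigma_m$.

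For (B1) the strategy splits into an integrality step and a positivity step. The positivity step is routine: once $\tau_i\in H$, write $\tau_i=\prod_j\rho_j^{c_{ij}}$ uniquely; Theorem~\ref{theo:FaMa}(2) gives an integer $q$ with $\tau_i^q\in\sgp(\brho)$, so $\prod_j\rho_j^{q c_{ij}}$ admits a nonnegative exponent representation, and uniqueness under full rank forces $q c_{ij}\ge 0$, hence $c_{ij}\ge 0$. Thus (B1) reduces to the integrality claim $\tau_i\in H$ for all $i$, equivalently $d=1$, i.e. $\langle\btau\rangle\subseteq\langle\brho\rangle$. Here I expect the main obstacle. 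The algebraic data of Theorem~\ref{theo:FaMa} alone does \emph{not} force $d=1$: the semigroup containments only give $H^{(p)}\subseteq\langle\btau\rangle$ and $\langle\btau\rangle^{(q)}\subseteq H$ (the subgroups of $p$th and $q$th powers), which is consistent with a strict refinement $H\subsetneq V$, so additional geometry is genuinely needed.

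My plan for the integrality step is to exploit the finiteness results that go strictly beyond Theorem~\ref{theo:FaMa}. Fix a stable cylinder $E_{\bfi_0}$ (Lemma~\ref{lem:XiRuan}); Lemma~\ref{lem:HM-Ratio-Finite} then gives that $\M''=\{\brho_{\bfi_0\bfi}/\btau_{\bfk\bfj_r}\}$ is finite, so in $\log$-coordinates the displacements $\log\brho_{\bfi_0\bfi}-\log\btau_{\bfk\bfj_r}$ take only finitely many values in the lattice $\log V\subset\bR^m$. Together with the matchable correspondence of Theorems~\ref{theo:new-criterion} and~\ref{Match}, which pairs $\SW(E,t)$ with $\SW(F,t)$ within bounded $h$-distance at every scale $t$, this yields a bounded-displacement, bounded-multiplicity correspondence between the full-rank lattice $\log\langle\brho\rangle=\bZ^m$ and $\log\langle\btau\rangle$. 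The crux is to upgrade this \emph{bounded-distance} matching into the \emph{exact} containment $\langle\btau\rangle\subseteq\langle\brho\rangle$. Heuristically, because $\brho$ has full rank the lattice $\bZ^m$ is rigid, and a strictly finer lattice $\log\langle\btau\rangle$ ought to be incompatible with matching to $\bZ^m$ within a fixed distance $M_0$ simultaneously at all scales $t$. Turning this heuristic into a proof—ruling out every proper refinement uniformly in $t$—is exactly the step the present techniques do not deliver, and is why the statement is a conjecture rather than a theorem.

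Finally, granting (B1), step (B2) is a second, more combinatorial obstacle. One has the Kraft-type identity $\sum_{i=1}^n\brho_{\bfj_i}^s=\sum_{i=1}^n\tau_i^s=1$, but an equality of total measure does not by itself guarantee that a prescribed multiset of cylinders $[\bfj_1],\dots,[\bfj_n]$ tiles $\Sigma_m$. The plan would be to use the matchable relation to organize the words $\bfj_i$ prefix-coherently—realizing them as the leaves of a single finite subtree of $\Sigma_m^*$—so that disjointness and covering hold simultaneously. As a consistency check, the $n=m$ instance of this entire program is precisely Theorem~\ref{theo:rankFull}, where the cut set is forced to be the trivial one and $\btau$ is a permutation of $\brho$; the conjecture asks for this rigidity to persist when $n\neq m$.
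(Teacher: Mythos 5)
The statement you were asked to prove is presented in the paper only as a conjecture: the paper explicitly remarks just before it that ``if the length of $\btau$ is not equal to $m$ then the characterization of $\btau$ is open,'' and it supplies no proof. So there is no proof in the paper to compare against, and your reading of the situation is the correct one. The direction that can be proved, you prove correctly: if $\btau$ is derived from $\brho$ then $\SD(\brho)\bleq\SD(\btau)$ by Proposition \ref{prop-1.3}. Your bookkeeping for the converse is also sound. Theorem \ref{theo:FaMa}(2) does force $\rank\langle\btau\rangle=\rank\langle\brho,\btau\rangle=m$; your reduction of ``derived'' to (B1) membership $\tau_i\in\sgp(\brho)$ plus (B2) cut-set realizability is the right decomposition; and your positivity argument (unique exponents $c_{ij}$ under full rank, then $qc_{ij}\geq 0$ from $\sgp(\tau_1^q,\dots,\tau_n^q)\subseteq\sgp(\rho_1,\dots,\rho_m)$, hence $c_{ij}\geq 0$) is complete and correct, so (B1) indeed hinges only on the integrality claim $\langle\btau\rangle\subseteq\langle\brho\rangle$.

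Your identification of the two genuine obstacles --- upgrading the bounded-multiplicity, bounded-$h$-distance correspondence of Theorems \ref{theo:new-criterion} and \ref{Match} into the exact lattice containment $\langle\btau\rangle\subseteq\langle\brho\rangle$, and then organizing the resulting words into an honest cut set of $\Sigma_m$ --- is an accurate account of why the statement remains open. It is also consistent with the one case that is settled: in Theorem \ref{theo:rankFull} one has $n=m$, and a cut set of $\Sigma_m$ of cardinality $m$ is necessarily the trivial one $\{1,\dots,m\}$, so both difficulties collapse and the conclusion becomes that $\btau$ is a permutation of $\brho$; this is exactly the consistency check you make at the end. In short, your proposal proves what can be proved, does not claim what cannot, and correctly locates the missing ideas; it matches the paper's own assessment that the statement is a conjecture rather than a theorem.
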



Another interesting and natural class to consider is when the contraction vectors
have two ratios. Namely we may ask under what conditions are
$\SD(\rho_1, \rho_2)\sim \SD(\tau_1,\tau_2)$. This question is completely
answered in \cite{RRW12}.

\begin{theorem} \label{theo:twobranch} 
Let $(\rho_1, \rho_2)$ and $(\tau_1, \tau_2)$ be two contraction
vectors with $\rho_1\leq \rho_2$, $\tau_1\leq \tau_2$. Assume that
$\rho_1\leq \tau_1$. Then $\SD(\brho)\sim \SD(\btau)$ if and only if
one of the two conditions holds:
    \begin{itemize}
    \item[\rm (1)]~~ $\rho_1=\tau_1$ and $\rho_2=\tau_2$.
    \item[\rm (2)]~~There exists a real number $0<\lambda<1$,  such that
    $$
       (\rho_1,\rho_2) = (\lambda^5,\lambda) \quad \mbox{and}\quad (\tau_1,\tau_2)=  (\lambda^3,\lambda^2).
    $$
    \end{itemize}
\end{theorem}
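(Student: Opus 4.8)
The plan is to prove the two directions separately, using the matchable condition (Theorem~\ref{Match}) as the main obstruction tool for necessity and the equivalence relation for sufficiency. For the sufficiency direction, case (1) is trivial since $\brho=\btau$ as multisets, so $\SD(\brho)=\SD(\btau)$. For case (2), I would verify directly that $(\lambda^5,\lambda)$ and $(\lambda^3,\lambda^2)$ are equivalent in the sense of the Definition preceding Proposition~\ref{prop-1.3}. The natural attempt is to find a common refinement: starting from $\brho=(\lambda^5,\lambda)$, I would look for a cut set of $\Sigma_2$ whose associated products, and likewise a cut set derived from $\btau=(\lambda^3,\lambda^2)$, yield the same contraction vector. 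For instance, subdividing the ratio $\lambda$ in $\brho$ and the ratios $\lambda^3,\lambda^2$ in $\btau$ along the appropriate cylinders should produce a common derived vector (some tuple of powers of $\lambda$ summing correctly in the dimension equation $\sum \rho_j^s=1$); I would exhibit such explicit cut sets and check that both reduce to it.

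For the necessity direction, assume $\SD(\brho)\sim\SD(\btau)$ with the stated orderings $\rho_1\le\rho_2$, $\tau_1\le\tau_2$, $\rho_1\le\tau_1$. By Theorem~\ref{theo:FaMa}(1), $\bQ(\rho_1^s,\rho_2^s)=\bQ(\tau_1^s,\tau_2^s)$, and the semigroup containments of part~(2) constrain the multiplicative structure sharply. The first step is to split according to $\rank\langle\brho\rangle$. If $\rank\langle\brho\rangle=2$ (full rank), then Theorem~\ref{theo:rankFull} applies directly and forces $\btau$ to be a permutation of $\brho$, giving case~(1). So the substantive work is the rank-one case: $\rho_1=\lambda^{a}$, $\rho_2=\lambda^{b}$, $\tau_1=\lambda^{c}$, $\tau_2=\lambda^{d}$ for a common base $\lambda\in(0,1)$ and positive integers $a\ge b$, $c\ge d$ (after normalizing $\lambda$ to be a generator of the cyclic group $\langle\brho,\btau\rangle$). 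The dimension equation gives $\lambda^{as}+\lambda^{bs}=1=\lambda^{cs}+\lambda^{ds}$.

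The heart of the proof is then to show that, among rank-one two-ratio vectors, matchability (Theorem~\ref{Match}) is extremely restrictive. Setting $x=\lambda^s\in(0,1)$, I would rewrite the two dimension equations as $x^{a}+x^{b}=1$ and $x^{c}+x^{d}=1$; since both equal $1$, the polynomial $x^a+x^b-x^c-x^d$ vanishes at the specific root $x$. The plan is to use the matchable condition to control the multiplicities: for each threshold $t$, the words in $\SW(E,t)$ and $\SW(F,t)$ must be put in bounded-to-bounded correspondence with distances $h(\brho_\bfi,\btau_\bfj)$ uniformly bounded by $M_0$. Because everything is a power of $\lambda$, the distance function $h$ collapses to (a constant multiple of) the absolute difference of integer exponents, so matchability says the exponent of $\brho_\bfi$ and the matched $\btau_\bfj$ differ by at most a fixed bound uniformly in $t$. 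Counting how many words of each type fall in a given exponent window, and comparing the growth of these counts as $t\to 0$, should force a rigid relation between the multisets $\{a,b\}$ and $\{c,d\}$. I expect this counting/growth comparison to be the main obstacle: one must show that the only way two such exponent-generated ``word distributions'' stay uniformly matchable is when $\{a,b\}=\{c,d\}$ (giving case~(1) after identifying $\lambda$) or when, up to scaling the base, $\{a,b\}=\{5,1\}$ and $\{c,d\}=\{3,2\}$, the unique nontrivial solution where $x^5+x=x^3+x^2$ shares the relevant root with the two-term dimension equations. Pinning down that $\{5,1\}$–$\{3,2\}$ is the only exceptional exponent pattern — ruling out all other integer solutions of the matchability-compatibility constraints — is the delicate number-theoretic core, and I would isolate it as a self-contained lemma about which pairs $(\{a,b\},\{c,d\})$ admit uniformly bounded matchings.
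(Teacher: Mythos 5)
Your overall skeleton coincides with the paper's: sufficiency of case (2) via a common derived vector, the full-rank case dispatched by Theorem~\ref{theo:rankFull}, and the rank-one case reduced, via Theorem~\ref{theo:FaMa}, to a common base $t$ with $\rho_j=t^{m_j}$, $\tau_j=t^{n_j}$ and the two dimension equations $x^{m_1}+x^{m_2}=1=x^{n_1}+x^{n_2}$, $x=t^s$. The sufficiency computation you defer does work: the cut set $\{1,21,22\}$ applied to $(\lambda^5,\lambda)$ and the cut set $\{11,12,2\}$ applied to $(\lambda^3,\lambda^2)$ both yield the vector $(\lambda^5,\lambda^6,\lambda^2)$ up to permutation, so Proposition~\ref{prop-1.3} gives $\SD(\lambda^5,\lambda)\sim\SD(\lambda^3,\lambda^2)$.

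The genuine gap is in the rank-one necessity step, which is where the whole difficulty of the theorem lives. You propose to force the dichotomy ($\{m_1,m_2\}=\{n_1,n_2\}$, or the $\{5,1\}$--$\{3,2\}$ pattern) out of the matchable condition by a counting/growth argument, and you isolate ``which exponent pairs admit uniformly bounded matchings'' as the key lemma. That lemma is vacuous: in the rank-one, common-base, equal-dimension setting, \emph{every} pair is matchable. Indeed, condition (ii) of matchability holds for any relation whatsoever, because every $\bfi\in\SW(E,t)$ satisfies $\brho_\bfi\in(t\rho_{\min},t]$ and every $\bfj\in\SW(F,t)$ satisfies $\btau_\bfj\in(t\tau_{\min},t]$, so the integer exponents of $\brho_\bfi$ and $\btau_\bfj$ with respect to a generator of $\langle\brho,\btau\rangle$ differ by a constant independent of $t$; and condition (i) can always be arranged, since equality of Hausdorff dimension gives $\card\,\SW(E,t)\asymp t^{-s}\asymp \card\,\SW(F,t)$, and any two finite sets of uniformly comparable cardinality admit a bipartite relation with degrees between $1$ and a fixed bound. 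Thus matchability carries no information beyond equality of dimension here (consistent with the fact that the paper uses Theorem~\ref{Match} only to prove the full-rank Theorem~\ref{theo:rankFull}), and no counting scheme built on it can single out the exceptional pattern. What actually closes the rank-one case in the paper is a purely algebraic input: since $x=t^s$ is a common root of the trinomials $x^{m_1}+x^{m_2}-1$ and $x^{n_1}+x^{n_2}-1$, these polynomials share a common factor, and Ljunggren's classification of the irreducibility/factorization of trinomials then forces either $\{m_1,m_2\}=\{n_1,n_2\}$ or, up to a common power $\lambda=t^k$, $\{m_1,m_2\}=\{5,1\}$ and $\{n_1,n_2\}=\{3,2\}$ (the relevant identity being $x^5+x-1=(x^2-x+1)(x^3+x^2-1)$). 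This external number-theoretic theorem is precisely what your proposal lacks; without it your ``delicate number-theoretic core'' remains unproved, and the tool you propose for proving it cannot, even in principle, do the job.
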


We provide a quick sketch of the proof here. First, assume that
$\rank\langle\rho_1,\rho_2\rangle=2$ or $\rank\langle\tau_1,\tau_2\rangle=2$. Then
we must have $\rho_1=\tau_1$ and $\rho_2=\tau_2$ by Theorem \ref{theo:rankFull}.
So we now only need to consider the case where
$\rank\langle\rho_1,\rho_2\rangle=\rank\langle\tau_1,\tau_2\rangle=1$. By
Theorem \ref{theo:FaMa} we know there exists a $t$ such that
$\rho_j = t^{m_j}$ and $\tau_j = t^{n_j}$ where $m_j, n_j\in\bZ^+$.
Set $x=t^s$ where $s$ is the dimension of $\SD(\rho_1,\rho_2)$. Then
$$
    x^{m_1}+x^{m_2}-1=0, \mhsp x^{n_1}+x^{n_2}-1=0.
$$
For the above two polynomials to have a common root they must have a
common factor. The irreducibility of trinomials, however, has been classified
by Ljunggren \cite{Lju60} (Theorem~3 in the paper). Applying the results in
\cite{Lju60} one can show that
$$
       (\rho_1,\rho_2) = (\lambda^5,\lambda) \mhsp \mbox{and}\quad (\tau_1,\tau_2)=  (\lambda^3,\lambda^2)
$$
for some $0<\lambda<1$, which takes on the form $\lambda= t^k$ for some $k\in\bZ^+$.

As an application of Theorem \ref{theo:twobranch}, we can see that
the conditions in Theorem~\ref{theo:FaMa} are necessary but not
sufficient via the following example.

\begin{example}
Let $x,y$, $0<x,y<1$, be the solution of the equations
$$
x^6+y=1 \mbox{ and } x^3+y^4=1.
$$
One can easily check that the solution indeed exists. Let $s$ be a
real number such that $0<s<1$. Suppose that the contraction vectors
of $E$ and $F$ are $(x^{6/s}, y^{1/s})$ and $(x^{3/s}, y^{4/s})$,
respectively. Then $E$ and $F$ have the same Hausdorff dimension and
satisfy the conditions in Theorem~\ref{theo:FaMa}. However, $E$ and
$F$ are not Lipschitz equivalent by Theorem~\ref{theo:twobranch}.
\end{example}


Another case where the Lipschitz equivalence of dust-like
self-similar sets can be characterized completely is when one of
them has uniform contraction ratios.

\begin{theorem}[\cite{RRW12}] \label{theo:uniform-contra}
Let $\brho=(\rho_1,\cdots,\rho_m)=(\rho,\dots,\rho)$ and
$\btau=(\tau_1, \dots, \tau_n)$. Then $\SD(\brho)$ and $\SD(\btau)$
are Lipschitz equivalent if and only if the following conditions
hold:
  \begin{itemize}
    \item[\rm(1)] $\dim_H \SD(\btau)= \dim_H \SD(\brho)=\log m/\log \rho^{-1}$.
    \item[\rm(2)] There exists a $q\in\bZ^+$ such that $m^{1/q}\in\bZ$ and
      $$
         \frac{\log \tau_j}{\log \rho} \in \frac{1}{q}\bZ \quad
      \mbox{ for all } \;\; j=1,2,\ldots,n.
      $$
  \end{itemize}
\end{theorem}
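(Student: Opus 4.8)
The plan is to establish the two implications separately: the forward direction (necessity of (1) and (2)) rests on the algebraic constraints of Theorem~\ref{theo:FaMa}, while the converse is proved by an explicit chain of derivations through a well-chosen intermediate vector.

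For necessity, condition (1) is free: a bi-Lipschitz bijection preserves Hausdorff dimension, so $\dim_H\SD(\btau)=\dim_H\SD(\brho)$, and solving $m\rho^s=1$ gives the common value $\log m/\log\rho^{-1}$. To obtain (2) I would first reduce to a rank-one situation. Since $\rank\langle\brho\rangle=\rank\langle\rho\rangle=1$, Theorem~\ref{theo:FaMa}(2) forces $\rank\langle\brho,\btau\rangle=1$, so $\langle\rho,\tau_1,\dots,\tau_n\rangle$ is infinite cyclic; let $\omega\in(0,1)$ generate it and write $\rho=\omega^{b}$, $\tau_j=\omega^{c_j}$ with $b,c_j\in\bZp$ and (because $\omega$ generates) $\gcd(b,c_1,\dots,c_n)=1$. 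Choosing $q=b$ already yields $\log\tau_j/\log\rho=c_j/b\in\frac1q\bZ$, the second half of (2).

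The remaining and hardest point is to prove $m^{1/q}=m^{1/b}\in\bZ$. From $\rho^s=1/m$ I get $\omega^{s}=m^{-1/b}$, so that the dimension identity $\sum_j\tau_j^s=1$ rewrites as $\sum_j y^{-c_j}=1$ with $y:=m^{1/b}>1$. Thus $y$ is a positive real root of the monic integer polynomial $Q(X)=X^{C}-\sum_j X^{C-c_j}$, $C=\max_j c_j$, and also of $X^{b}-m$, whose roots are $\zeta^{k}y$ with $\zeta=e^{2\pi i/b}$. The idea is a conjugate-counting argument: every $\bQ$-conjugate of $y$ is a root of $Q$ and hence of the form $\zeta^{k}y$ satisfying $\sum_j\zeta^{-kc_j}y^{-c_j}=1$; comparing with $\sum_j y^{-c_j}=1$ and invoking the equality case of the triangle inequality forces each summand to be a nonnegative real, i.e. $\zeta^{-kc_j}=1$, so $b\mid kc_j$ for all $j$. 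Combined with $\gcd(b,c_1,\dots,c_n)=1$ this gives $b\mid k$, whence $\zeta^{k}=1$ and $y$ is its own only conjugate; being a positive rational root of a monic integer polynomial, $y\in\bZ$. I expect this Galois/triangle-inequality step to be the main obstacle.

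For sufficiency, assume (1) and (2), fix $q$ with $M:=m^{1/q}\in\bZ$, and use (2) to write $\tau_j=\rho^{e_j/q}$ with $e_j\in\bZp$. Set $\sigma:=\rho^{1/q}$ and introduce the uniform intermediate vector $\bbeta=(\sigma,\dots,\sigma)$ with $M$ entries. I would show both $\brho$ and $\btau$ are derived from $\bbeta$. The vector $\brho$ is derived from $\bbeta$ because the $M^{q}=m$ words of length $q$ over an $M$-letter alphabet form a cut set of $\Sigma_M$, each contributing ratio $\sigma^{q}=\rho$. For $\btau$, note $\sigma^{s}=1/M$, so condition (1) gives $\sum_j M^{-e_j}=\sum_j\tau_j^s=1$; by Kraft's inequality, this equality is exactly the condition guaranteeing a complete prefix set (i.e. a cut set) of $\Sigma_M$ with codeword lengths $e_1,\dots,e_n$, and such a cut set realizes $\btau=(\sigma^{e_1},\dots,\sigma^{e_n})$ as derived from $\bbeta$. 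Hence $\brho\sim\bbeta$ and $\bbeta\sim\btau$; Proposition~\ref{prop-1.3} gives both $\SD(\brho)\bleq\SD(\bbeta)$ and $\SD(\bbeta)\bleq\SD(\btau)$, and transitivity of Lipschitz equivalence yields $\SD(\brho)\bleq\SD(\btau)$, completing the proof.
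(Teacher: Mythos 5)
Your proof is correct in both directions. Note first that the survey states Theorem~\ref{theo:uniform-contra} without proof (it is quoted from \cite{RRW12}), so the comparison below is with the argument that the paper's own toolkit naturally yields. Your sufficiency half is essentially that argument: with $M=m^{1/q}\in\bZ$, $\sigma=\rho^{1/q}$ and $\tau_j=\rho^{e_j/q}$, both $\brho$ and $\btau$ are derived from the uniform vector $(\sigma,\dots,\sigma)$ of length $M$ --- $\brho$ via all $M^q=m$ words of length $q$, and $\btau$ via a cut set with word lengths $e_1,\dots,e_n$, whose existence is exactly the Kraft-equality statement $\sum_j M^{-e_j}=1$ supplied by condition (1) --- after which Proposition~\ref{prop-1.3} and transitivity of Lipschitz equivalence finish. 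Where you genuinely diverge is the necessity of $m^{1/q}\in\bZ$. The route suggested by the paper uses both parts of Theorem~\ref{theo:FaMa}: since $\rho^s=1/m\in\bQ$, part (1) forces every $\tau_j^s\in\bQ$; part (2) gives $\tau_j=\rho^{k_j/q}$ with $\gcd(k_1,\dots,k_n,q)=1$; then each $m^{k_j/q}$ is rational and an algebraic integer (a root of $X^q-m^{k_j}$), hence an integer, and B\'ezout gives $m^{1/q}=\prod_j\bigl(m^{k_j/q}\bigr)^{a_j}\cdot m^{a_0}\in\bQ$, again an algebraic integer, hence in $\bZ$. You instead bypass the field condition (1) entirely, using only the rank consequence of part (2) together with the dimension equation, and recover integrality by conjugate counting (in your notation $\rho=\omega^b$, $\tau_j=\omega^{c_j}$, $y=m^{1/b}$, $\zeta=e^{2\pi i/b}$): every conjugate of $y$ is a common root of $X^b-m$ and of $X^C-\sum_j X^{C-c_j}$, and the equality case of the triangle inequality forces $\zeta^{-kc_j}=1$ for all $j$, whence $b\mid kc_j$ and then $b\mid\gcd(kb,kc_1,\dots,kc_n)=k\gcd(b,c_1,\dots,c_n)=k$, so the only conjugate is $y$ itself and $y\in\bZ$ by the rational root theorem. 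This is correct and self-contained; what it buys is independence from the field condition, and what it costs is the Galois/triangle-inequality analysis that the field condition lets one skip. One cosmetic slip: a conjugate of $y$ has the form $\zeta^k y$ because it is a root of $X^b-m$, not because it is a root of $Q$; you clearly use both facts, but the sentence as written conflates them.
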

Note that by Theorem \ref{theo:FaMa} all $\tau_j$ must be rational powers of $\rho$.
The above theorem shows that one needs more to achieve Lipschitz equivalence.

In other direction, using a measure-preserving property, Xi and Ruan \cite{XiRu08} and Xi \cite{Xi10} showed that the graph-directed structure can be used to characterize the Lipschitz equivalence of two dust-like self-similar sets. We remark that the idea of studying graph-directed structures of self-similar sets appeared in \cite{RW}, where they deal with self-similar sets with overlaps.

We recall the definition of
graph-directed sets (see \cite{MW}). Let
$G=(V,\Gamma)$ be a directed graph and $d$ a positive integer. Suppose for each edge $e\in \Gamma$,
there is a corresponding similarity $\phi_e:\, \bR^d\to \bR^d$ with
ratio $\gr_e\in (0,1)$. Assume that for each vertex $i\in V$, there exists an
edge starting from $i$. Then there exists a unique family
$\{E_i\}_{i\in V}$ of compact subsets of $\bR^d$ such that for any
$i\in V$,
\begin{equation}\label{eq:GDS-def-Ei}
  E_i=\bigcup_{j\in V} \bigcup_{e\in \SE_{ij}} \phi_e(E_j),
\end{equation}
where $\SE_{ij}$ is the set of edges starting from $i$ and ending at
$j$. In particular, if the union in (\ref{eq:GDS-def-Ei}) is disjoint
for any $i$, we call $\{E_i\}_{i\in V}$ \emph{dust-like graph-directed
sets} on $(V,\Gamma)$.

Now, let $\{F_i\}_{i\in V}$ be dust-like
  graph-directed sets on $(V,\Gamma)$ satisfying
\begin{equation}\label{eq:GDS-def-Fi}
  F_i=\bigcup_{j\in V} \bigcup_{e\in \SE_{ij}}\psi_e(F_j), \quad i\in V.
\end{equation}
  If similarities $\phi_e$ and $\psi_e$ have the same ratio for each $e\in\Gamma,$ we say that $\{E_i\}_{i\in V}$ and $\{F_i\}_{i\in V}$ have the \emph{same graph-directed structure}.

Recall that $E$ and $F$ are the attractors of the
IFSs $\Phi=\{\phi_1, \dots, \phi_m\}$ and $\Psi=\{\psi_1, \dots, \psi_n\}$, respectively. Given a finite subset $\Lambda$ of $\gS_n^*$ and a positive real number $r$, we call $r\cdot \bigcup_{\i\in \Lambda}\psi_{\i}(F)$ a \emph{finite copy} of $F$. It was proved in \cite{CP,XiRu08} that a finite copy of $F$ is always Lipschitz equivalent to $F$.

\begin{theorem}[\cite{Xi10,XiRu08}]
  Let $E$ and $F$ be two dust-like self-similar subsets of $\bR^d$. Then $E\sim F$ if and only if there exist graph-directed sets $\{E_i\}_{i=1}^\ell$ and $\{F_i\}_{i=1}^\ell$ such that
  \begin{enumerate}
    \item $\{E_i\}_{i=1}^\ell$ and $\{F_i\}_{i=1}^\ell$ have the same graph-directed structures,
    \item $E_i=E$ for $i=1,\ldots,\ell$,
    \item $F_i$ is a finite copy of $F$ for $i=1,\ldots,\ell$.
  \end{enumerate}
\end{theorem}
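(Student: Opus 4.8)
The plan is to prove the two implications separately; the ``if'' direction is a short consequence of facts already assembled, while the ``only if'' direction contains the real work. \emph{Sufficiency.} Suppose graph-directed families $\{E_i\}_{i=1}^\ell$ and $\{F_i\}_{i=1}^\ell$ are given, with the same graph-directed structure, $E_i=E$, and $F_i$ a finite copy of $F$. Since both families are dust-like and are driven by the \emph{same} graph $(V,\Gamma)$ with the \emph{same} contraction ratios $\rho_e$ on each edge, every point of $E_i$ and of $F_i$ is coded by the same set of infinite admissible paths in $(V,\Gamma)$, and the two induced metrics are both comparable to the path metric built from the common products of ratios. This is the graph-directed analogue of Proposition \ref{prop-1.2}, and the coding map is a bi-Lipschitz bijection $E_i\to F_i$; in particular $E=E_1\sim F_1$. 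As recalled before the statement, a finite copy of $F$ is Lipschitz equivalent to $F$, so $F_1\sim F$, and transitivity of $\sim$ yields $E\sim F$.

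\emph{Necessity.} Fix a bi-Lipschitz bijection $f\colon E\to F$ and, by Lemma \ref{lem:XiRuan}, a stable cylinder $E_{\bfi_0}$, so that by (\ref{3.5}) the map $f$ multiplies $\SH^s$ by a fixed constant $c$ on every Borel subset of $E_{\bfi_0}$. For each $\bfi\in\Sigma_m^*$ record the maximum decomposition of $f(E_{\bfi_0\bfi})$ with respect to $F$ and $n_0$,
\[ f(E_{\bfi_0\bfi})=\bigcup_{r=1}^{p_{\bfi_0\bfi}} F_{\bfk\bfj_r},\qquad |\bfj_r|=n_0, \]
with $F_{\bfk}$ the smallest cylinder containing $f(E_{\bfi_0\bfi})$. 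The idea is to attach to each cylinder $E_{\bfi_0\bfi}$ a finite \emph{type}: the \emph{normalized} pattern $\{\bfj_1,\dots,\bfj_{p_{\bfi_0\bfi}}\}$ obtained after stripping the common prefix $\bfk$. This pattern is finite-valued, since $p_{\bfi_0\bfi}\le n^{n_0}$ (Remark \ref{rem:for-FM-fund}) and each $\bfj_r$ lies in the finite set $\{1,\dots,n\}^{n_0}$; moreover the measure ratio $\SH^s(E_{\bfi_0\bfi})/\SH^s(F_{\bfk})$ equals $c^{-1}\sum_r(\btau_{\bfj_r})^s$ by the measure-preserving identity together with Lemma \ref{lem:FaMa-fundamental}, hence is likewise finite-valued (consistent with Lemma \ref{lem:HM-Ratio-Finite}).

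I then take the finitely many types to be the vertices $1,\dots,\ell$ of a graph $(V,\Gamma)$. To each type I attach, on the $E$-side, the set $E_i:=E$, since every cylinder $E_{\bfi_0\bfi}$ is a rescaled copy of $E$; and on the $F$-side, the finite copy $F_i:=\bigcup_r\psi_{\bfj_r}(F)$, i.e. the normalization of $f(E_{\bfi_0\bfi})$, which is a finite union of $F$-cylinders. Refining $E_{\bfi_0\bfi}$ into its $m$ children $E_{\bfi_0\bfi j}$ refines the $F$-side decomposition, producing the edges of $\Gamma$; the $E$-side edge similarities are the $\phi_j$ with ratios $\rho_j$, and because the measure-preserving identity (\ref{3.5}) forces $\SH^s(f(E_{\bfi_0\bfi j}))=\rho_j^{s}\,\SH^s(f(E_{\bfi_0\bfi}))$, the matching $F$-side similarities carry the \emph{same} ratios $\rho_j$. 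Checking that the objects $\{E_i\}$ and $\{F_i\}$ satisfy the graph-directed equations (\ref{eq:GDS-def-Ei}) and (\ref{eq:GDS-def-Fi}) for this common structure then yields properties (1)--(3).

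\emph{Main obstacle.} The delicate point is to arrange the type so that the construction is genuinely \emph{finite-state}: the type of each child $E_{\bfi_0\bfi j}$ must be determined by the type of the parent $E_{\bfi_0\bfi}$ and the letter $j$ alone, with no reference to the unbounded words $\bfi$ or $\bfk$. The difficulty is that the smallest enclosing cylinder $F_{\bfk}$ deepens as $\bfi$ grows and the decomposition overhang can shift, so a priori the refinement pattern could depend on the full history. Controlling this drift is exactly where the measure-preserving property (\ref{3.5}) and the finiteness of $\M,\M',\M''$ in Lemma \ref{lem:HM-Ratio-Finite} are indispensable, since they confine the transition data to the bounded pattern $\{\bfj_r\}$ and the finite ratio set; one may have to enlarge the type by a bounded amount of extra data to force determinism while keeping the state set finite. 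Once the state set is shown to be finite and the parent-to-child transitions well defined, assembling $(V,\Gamma)$ and verifying that the two graph-directed systems share edge ratios is routine.
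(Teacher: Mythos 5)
The paper itself does not prove this theorem: it is quoted from \cite{Xi10,XiRu08}, and the survey only supplies the ingredients, namely Theorem~\ref{thm:gds-strong}, the remark that a finite copy of $F$ is always Lipschitz equivalent to $F$, Lemma~\ref{lem:XiRuan}, and Lemma~\ref{lem:HM-Ratio-Finite}. Your sufficiency argument is correct and is exactly the intended assembly of these ingredients: Theorem~\ref{thm:gds-strong} gives $E=E_1\bleq F_1$, the finite-copy fact gives $F_1\bleq F$, and transitivity finishes. That half needs no repair.

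The necessity direction, however, contains a genuine gap --- one you flag yourself as the ``main obstacle'' and then leave unresolved, so the proof is not complete; moreover two concrete points go wrong as written. First, with your normalization (strip the prefix $\bfk$ and set $F_i:=\bigcup_r\psi_{\bfj_r}(F)$) the $F$-side edge ratios are \emph{not} $\rho_j$: if the piece of $F_i$ corresponding to the letter $j$ is a similar copy of $F_{t(j)}$ with ratio $\mu$, then $\mu^s\,\SH^s(F_{t(j)})=\rho_j^s\,\SH^s(F_i)$, so $\mu=\rho_j$ only when all the sets $F_i$ have equal measure; under your normalization $\SH^s(F_i)=\SH^s(F)\sum_r \btau_{\bfj_r}^s$ varies with the pattern, and your appeal to $\SH^s(f(E_{\bfi_0\bfi j}))=\rho_j^s\,\SH^s(f(E_{\bfi_0\bfi}))$ does not cure this. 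The repair is to normalize by the $E$-side contraction instead: take $F_i$ to be $f(E_{\bfi_0\bfi})$ rescaled by $\brho_{\bfi_0\bfi}^{-1}$, which is still a finite copy of $F$; then (\ref{3.5}) forces $\SH^s(F_i)=c\,\SH^s(E)$ for \emph{every} vertex, and the edge ratios come out exactly $\rho_j$. This is precisely where the measure-preserving property earns its keep. Second, the ``determinism'' you worry about is not what the construction requires, and chasing it is the wrong move: different cylinders of the same type may indeed have children of different types. What is actually needed is that the type determine $F_i$ up to isometry, and for that the type must record not only the pattern $\{\bfj_1,\dots,\bfj_{p}\}$ but also the scale $\btau_{\bfk}/\brho_{\bfi_0\bfi}$, which takes only finitely many values by Lemma~\ref{lem:HM-Ratio-Finite}. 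Once the type is enlarged in this way, one fixes a \emph{single representative} cylinder of each type and uses its $m$ children to define the edges out of that vertex; the equations (\ref{eq:GDS-def-Ei}) and (\ref{eq:GDS-def-Fi}) require one valid disjoint decomposition per vertex, not a history-independent transition rule for all cylinders, and uniqueness of the attractor of the resulting graph-directed system then identifies $\{E_i\}$ and $\{F_i\}$ with the sets you built. With these two repairs your outline closes; as written, it does not.
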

Notice that  the conditions in the above theorem
are often difficult to check. We pose the following problem.
\begin{ques}\label{ques-algo}
  Given two contraction ratios $\brho$ and $\btau$, devise an algorithm to determine
  in finite steps  the Lipschitz equivalence of $\SD(\brho)$ and $\SD(\btau)$.
\end{ques}


\medskip

\section{Touching IFS and Lipschitz equivalence: One dimensional case}

So far we have focused almost exclusively on the algebraic properties of contraction ratios.
Yet we should not overlook the importance of geometry in the study.
One interesting question in Lipschitz equivalence concerns the geometric
structures of the generating IFSs of self-similar sets.
One such problem is the Lipschitz equivalence of
two self-similar sets have the same contraction ratios but one is dust-like while
another has some touching structures.
The best known example is Problem \ref{prob1} in Section 1, known as
the $\{1,3,5\}-\{1,4,5\}$ problem
proposed by David and Semmes (\cite{DS}, Problem~11.16).
%
As we mentioned in Section 1, this problem was settled in \cite{RRX06}, which
proved that the two sets are indeed Lipschitz equivalent. In this section we give a
more detailed description of the techniques used in \cite{RRX06} to solve the $\{1,3,5\}-\{1,4,5\}$ problem. These techniques have also led to further
recent development \cite{RWX,XiRu07} on the Lipschitz equivalence of touching IFSs in more general settings. We shall provide more details on these developments as well.

\subsection{The $\{1,3,5\}-\{1,4,5\}$ problem and the graph-directed method}

An important technique is the graph-directed method, and here we show how it works
by proving the equivalence of the sets $M$ and $M^\prime$. Recall from Section 1, Problem
\ref{prob1} that $M$ is the dust-like $\{1,3,5\}$-set while $M'$ is the
$\{1,4,5\}$-set, which has touching structure, see Figure~\ref{figure: DS-set}.

\begin{theorem}[\cite{RRX06}]\label{thm:gds-strong}
  Suppose that dust-like graph-directed sets $\{E_i\}_{i\in V}$ and $\{F_i\}_{i\in V}$ have the same graph-directed structure. Then $E_i\sim
  F_i$ for each $i\in V$.
\end{theorem}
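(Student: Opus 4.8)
The plan is to construct the bi-Lipschitz map directly from the common symbolic coding shared by the two systems. Since $\{E_i\}_{i\in V}$ and $\{F_i\}_{i\in V}$ sit over the same directed graph $(V,\Gamma)$ and $\phi_e,\psi_e$ have the same ratio $\gr_e$ for each edge $e$, both families are coded by the same space of infinite admissible paths. For each vertex $i$ let $\Sigma_i$ denote the set of infinite paths $\mathbf{e}=e_1e_2\cdots$ with $e_1$ starting at $i$ and the terminus of $e_k$ equal to the origin of $e_{k+1}$. Along such a path the contraction product $\gr_{e_1}\cdots\gr_{e_k}$ is the \emph{same} for both systems, so I equip $\Sigma_i$ with a single metric $\mathbf{d}$: for distinct paths sharing exactly the prefix $e_1\cdots e_k$, set $\mathbf{d}(\mathbf{e},\mathbf{e}')=\gr_{e_1}\cdots\gr_{e_k}$ (and $=1$ if they already differ at $e_1$). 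As in the proof of Proposition~\ref{prop-1.2}, $(\Sigma_i,\mathbf{d})$ is a metric space, and this metric is intrinsic to the graph-directed structure, hence identical for $E$ and $F$.

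First I would show that the coding map $\pi_E\colon\Sigma_i\lra E_i$, sending $\mathbf{e}$ to the unique point of $\bigcap_{k\geq1}\phi_{e_1}\circ\cdots\circ\phi_{e_k}(E_{j_k})$ (with $j_k$ the terminus of $e_k$), is a bi-Lipschitz bijection. Injectivity and surjectivity follow from the dust-like hypothesis, which makes the unions in (\ref{eq:GDS-def-Ei}) disjoint. For the metric estimates, the upper bound is immediate: two points with common prefix of length $k$ lie in a common cylinder of diameter at most $(\max_i\diam E_i)\,\gr_{e_1}\cdots\gr_{e_k}$. The lower bound is where the real work lies: because $V$ is finite and each union in (\ref{eq:GDS-def-Ei}) is a disjoint union of compact sets, there is a uniform gap $\delta>0$ so that within any $E_i$ distinct level-one pieces $\phi_e(E_j)$ are separated by at least $\delta\cdot\diam E_i$; applying this self-similarly at the first index where the two paths diverge yields $|x-y|\geq c\,\gr_{e_1}\cdots\gr_{e_k}$ for a constant $c>0$ depending only on the finitely many ratios, diameters, and separations. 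The same argument, verbatim, gives that $\pi_F\colon\Sigma_i\lra F_i$ is bi-Lipschitz for the \emph{same} metric $\mathbf{d}$.

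Then the composition $f_i:=\pi_F\circ\pi_E^{-1}\colon E_i\lra F_i$ is a bijection, and it is bi-Lipschitz because it factors through the common intermediate metric space $(\Sigma_i,\mathbf{d})$: each of $\pi_E,\pi_F$ distorts distances by bounded factors, so $f_i$ does too. Carrying this out for every $i\in V$ gives $E_i\bleq F_i$, proving the theorem.

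I expect the main obstacle to be the uniform lower separation estimate propagated across all scales and all vertices simultaneously. For a single self-similar set this is the standard consequence of the strong separation condition, but in the graph-directed setting one must check that the constants can be chosen independently of the vertex and of the depth; the finiteness of $V$ and the compactness of the $E_i$ and $F_i$ are exactly what make this possible, and establishing the comparability constants carefully is the technical heart of the argument.
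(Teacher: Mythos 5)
Your proposal is correct and takes essentially the same route as the paper: the paper defines the map directly by matching the infinite path coding of each point, and its two-sided estimates $c_1^{-1}\prod_{i=1}^m\rho_{e_i}\leq |x-x'|\leq c_1\prod_{i=1}^m\rho_{e_i}$ (and the analogous bounds with $c_2$ for $f(x),f(x')$) are exactly the statement that both coding maps are bi-Lipschitz from the common symbolic path space, which you make explicit by writing $f_i=\pi_F\circ\pi_E^{-1}$. Your uniform separation argument (disjoint compact level-one pieces, finitely many pairs of distinct edges from a common vertex) is precisely the paper's lower bound via $\min_{(e,e')}d\bigl(\phi_e(E_j),\phi_{e'}(E_{j'})\bigr)>0$.
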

\begin{proof}
We shall use the notations in \eqref{eq:GDS-def-Ei} and \eqref{eq:GDS-def-Fi}.
Since $\{E_i\}_{i\in V}$ are dust-like, for any $x\in E_{i}$,
there is a unique infinite path $%
e_{1}\cdots e_{k}\cdots $ starting at $i$ such that
$$
    \{x\}=\bigcap _{k=1}^\infty \phi_{e_{1}\cdots e_{k}}(E_{i_{k}})
$$
where the edge $e_{k}$ ends at $i_{k}$ for every $k$. We say that $e_1e_2\cdots$ is the coding of $x$.
Hence the mapping $f:\, E_i\to F_i$ defined by
$$ \{f(x)\}=\bigcap_{k=1}^\infty \psi_{e_{1}\cdots e_{k}}(F_{i_{k}}).
$$
is a bijection. It remains to show that $f$ is bi-Lipschitz.

Suppose $x,x^{\prime }\in E_{i}$. Let $e_1e_2e_3\cdots$ and
$e^\prime_1 e^\prime_2 e^\prime_3 \cdots$ be the coding of $x$ and $x^\prime$, respectively.
Let $m$ be the largest integer such that
$e_1e_2\cdots e_m = e^\prime_1 e^\prime_2\cdots e^\prime_m$.
Since both $x$ and $x^\prime$ are in the set $\phi_{e_1\cdots e_m}(E_{i_m})$, we have
$$ |x-x^{\prime }|\leq \diam \phi_{e_{1}\cdots
e_{m}}(E_{i_{m}})=\left(\prod_{i=1}^{m}\rho
_{e_{i}}\right) \diam(E_{i_{m}}).
$$
On the other hand, by the maximality of $m$, we have
\begin{eqnarray*}
|x-x^{\prime }| &\geq& d(\phi_{e_1\cdots e_m e_{m+1}}(E_{i_{m+1}}),
\phi_{e_1\cdots e_m e^\prime_{m+1}}(E_{i^\prime_{m+1}})) \\
&\geq& \left( \prod_{i=1}^m \gr_{e_i} \right) \min_{(e,e^\prime)} d(\phi_e(E_j), \phi_{e^\prime} (E_{j^\prime})),
\end{eqnarray*}%
where the minimum is taking over all the pairs $(e,e^\prime)$ of distinct edges stemming from a common vertex.
For such a pair, let $j$ and $j^\prime$ be the end vertices of $e$ and $e^\prime$.
Since $e$ and $e^\prime$ start from a common vertex $i$, $\phi_e(E_j)$ and $\phi_{e^\prime} (E_{j^\prime})$
are disjoint closed subsets of $E_i$. Hence the minimum is a positive number.

Therefore, there exists a constant $c_{1}>0$ depending only on
$\{E_{i}\}$ and $\{\phi_{e}\}$ such that $$
c_{1}^{-1}(\prod_{i=1}^{m}\rho _{e_{i}})\leq |x-x^{\prime
}|\leq c_{1}(\prod_{i=1}^{m}\rho _{e_{i}}).
$$
Similarly, there exists a constant $c_{2}>0$ depending only on
$\{F_{i}\}$ and $\{\psi_{e}\}$ such that
$$c_{2}^{-1}(\prod_{i=1}^{m}\rho _{e_{i}})\leq
|f(x)-f(x^{\prime })|\leq c_{2}(\prod_{i=1}^{m}\rho
_{e_{i}}).$$
It follows that
$c_{1}^{-1}c_{2}^{-1}|x-x^{\prime }|\leq
|f(x)-f(x^{\prime })|\leq c_{1}c_{2}|x-x^{\prime }|.
$
\end{proof}

\begin{rem}
Theorem~\ref{thm:gds-strong} and its proof are natural extensions of Proposition~\ref{prop-1.1}.
\end{rem}

With the above lemma we can show that the $\{1,3,5\}$-set and
the $\{1,4,5\}$-set are Lipschitz equivalent.

\begin{prop}[\cite{RRX06}]
  The $\{1,3,5\}$-set $M$ and the $\{1,4,5\}$-set $M'$ are Lipschitz equivalent.
\end{prop}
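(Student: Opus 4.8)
The plan is to realize both $M$ and $M'$ as components of two dust-like graph-directed systems that share the \emph{same} graph-directed structure, and then to quote Theorem~\ref{thm:gds-strong}. The only real difficulty is that $M'$ itself is not dust-like: the cylinders $\phi_4(M')$ and $\phi_5(M')$ meet at the point $4/5$. The device I would use is to absorb every such contact into a single ``piece'' of the graph-directed decomposition, so that the pieces are separated by gaps even though an individual piece may contain internal touchings; this is harmless because the definition of a dust-like graph-directed system only demands that the pieces in each defining union be disjoint.

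First I would analyze $M'$ by grouping abutting copies into runs. For the union of $p$ consecutive unit-translated copies, $W_p := M' \cup (M'+1) \cup \cdots \cup (M'+(p-1))$ (these touch at the integer points, since $0,1\in M'$), I would isolate its maximal touching runs. Inspecting the digit pattern $\{1,4,5\}$, inside a copy only $\phi_4$ and $\phi_5$ touch, while across copies the right cylinder $\phi_5$ of one copy touches the left cylinder $\phi_1$ of the next; since $\phi_1$ and $\phi_4$ are separated by a gap, a maximal touching chain is a block $\phi_4,\phi_5$ of one copy followed by the block $\phi_1$ of the next, hence has length at most $3$. Consequently every $W_p$ splits, at ratio $1/5$, into one run of length $1$, then $(p-1)$ runs of length $3$, and one run of length $2$. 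Setting $A=W_1=M'$, $B=W_2$, $H=W_3$ and reading a length-$1$, length-$2$, length-$3$ run as a copy of $A$, $B$, $H$ respectively, this yields a $3$-vertex dust-like system with edges (all of ratio $1/5$)
\[
  A \to A,B, \qquad B \to A,H,B, \qquad H \to A,H,H,B,
\]
and $M' = E_A$. The boundedness of the run lengths is the crux of the whole argument: it is exactly what keeps the vertex set finite, and it is a special feature of the arithmetic of the digit set $\{1,4,5\}$.

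Next I would build a second, genuinely dust-like system on the \emph{same} graph whose $A$-component is $M$. Here the digit set $\{1,3,5\}$, being an arithmetic progression of common difference $2$, makes everything close up. I would take $F_A, F_B, F_H$ to be copies of $M$ placed at the even integers,
\[
   F_A = M, \qquad F_B = M \cup (M+2), \qquad F_H = M \cup (M+2) \cup (M+4),
\]
which are dust-like since the unit blocks are separated by gaps. One then checks, by regrouping the subcylinders $\phi_1(M),\phi_3(M),\phi_5(M)$ of each block — whose positions sit in arithmetic progression — that the edges reproduced are exactly $A\to A,B$; $B\to A,H,B$; $H\to A,H,H,B$, all at ratio $1/5$. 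For instance $\phi_3(M)\cup\phi_5(M)$ is a $1/5$-copy of $F_B$, giving the edge $A\to B$, and the remaining verifications are the same kind of elementary bookkeeping with the positions $\{0\},\{0,2\},\{0,2,4\}$.

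Finally, the two systems $\{E_A,E_B,E_H\}$ and $\{F_A,F_B,F_H\}$ have identical graph-directed structure — the same graph with the same ratio on each edge — so Theorem~\ref{thm:gds-strong} yields $E_A \sim F_A$, that is, $M' \sim M$. The hard part is not this final invocation but the two structural facts established by hand: that the touching runs of $\{1,4,5\}$ never exceed length $3$, so the graph is finite, and that the arithmetic spacing of $\{1,3,5\}$ lets $F_A,F_B,F_H$ realize that same finite graph dust-like. One should also confirm that every touching point of $M'$ remains interior to a single piece at each level of the decomposition, so the coding used in the proof of Theorem~\ref{thm:gds-strong} stays well defined; this holds precisely because each contact is absorbed into an $H$-piece.
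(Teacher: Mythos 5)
Your proposal is correct and is essentially identical to the paper's proof: your vertices $A,B,H$ and $F_A,F_B,F_H$ are exactly the paper's $M'_1=M'$, $M'_2=M'\cup(M'+1)$, $M'_3=M'\cup(M'+1)\cup(M'+2)$ and $M_1=M$, $M_2=M\cup(M+2)$, $M_3=M\cup(M+2)\cup(M+4)$, with the same edge structure (two, three, and four edges of ratio $1/5$ out of the respective vertices) and the same final appeal to Theorem~\ref{thm:gds-strong}. The paper simply writes down the six set equations directly, while you additionally explain the run-length bookkeeping that motivates them; the substance is the same.
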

\begin{proof}
  Define $M_1=M, M_2=M \cup (M+2), M_3=M \cup (M+2) \cup (M+4)$, and
    $M^\prime_1=M^\prime, M^\prime_2=M^\prime \cup (M^\prime +1), M^\prime_3=M^\prime \cup (M^\prime +1) \cup (M^\prime +2)$.
   Clearly,
  \begin{align*}
   & M_1 = M_1/5 \cup (M_2/5 + 2/5),  \qquad  M_2 = M_1/5 \cup (M_3/5 + 2) \cup (M_2/5+ 2/5), \\
   & M_3 = M_1/5 \cup (M_3/5 +2) \cup (M_3/5+4) \cup (M_2/5+ 2/5),  \\
   & M_1^\prime=M_1^\prime/5 \cup (M^\prime_2/5+3/5),  \qquad   M_2^\prime=M_1^\prime/5 \cup (M^\prime_3/5+3/5) \cup (M^\prime_2/5 +8/5), \\
   & M^\prime_3 = M^\prime_1/5 \cup (M^\prime_3/5 +3/5) \cup (M^\prime_3/5+8/5) \cup(M^\prime_2/5+ 13/5).
  \end{align*}
  Since all the similitudes have ratio $1/5$, Theorem~\ref{thm:gds-strong} shows that $M_1\sim M^\prime_1$, i.e., $M\sim M^\prime$.
\end{proof}

The technique can be applied to prove a more general theorem. Assume that $\brho=(\rho_1,\ldots,\rho_n)$ is a
contraction vector (in $\bR$) with $n\geq 3$.
Let $\Psi=\{\psi_i(x)=\rho_i x+t_i\}_{i=1}^n$ be an IFS on $\bR$ satisfing
the following three properties:
\begin{itemize}
\item[\rm (1)]~The subintervals
$\psi_1([0,1]),\ldots,\psi_n([0,1])$ are spaced from left to right
without overlapping, i.e. their interiors do not intersect. This means the contraction ratio is ordered.
\item[\rm (2)]~There exists at least one $i\in\{1,2,\ldots,n-1\}$, such that the intervals
$\psi_i([0,1])$ and $\psi_{i+1}([0,1])$ are touching, i.e.,
$\psi_i(1)=\psi_{i+1}(0)$.
\item[\rm (3)]~The left endpoint of $\psi_1[0,1]$ is $0$ and the right endpoint
of $\psi_n[0,1]$ is $1$. This means the touching is regular.
\end{itemize}
Denote by $T$ the attractor of the IFS $\gY$. We call $T$ a \emph{(regular) touching self-similar set} with \emph{(ordered) contraction vector} $\brho$.
In this section, we will always assume that the touching self-similar set is regular and the contraction vector is ordered.

Denote by $\ST(\brho)$ the family of all touching self-similar sets with contraction vector $\brho$.
We have the following theorem:
\begin{theorem}[\cite{RRX06}]
  Assume that $\brho=(\rho_1,\ldots,\rho_n)=(\rho,\ldots,\rho)$. Then $T\sim D$ for every $T\in \ST(\brho)$ and $D\in \SD(\brho)$.
\end{theorem}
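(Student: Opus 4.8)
The plan is to prove the theorem by the graph-directed method of Theorem~\ref{thm:gds-strong}. Since every member of $\SD(\brho)$ is Lipschitz equivalent to every other by Proposition~\ref{prop-1.1}, it suffices to produce one convenient dust-like set $D_0\in\SD(\brho)$, show $T\sim D_0$, and conclude $T\sim D$ by transitivity. To invoke Theorem~\ref{thm:gds-strong} I would exhibit two families of dust-like graph-directed sets, one containing $T$ and one containing $D_0$, sharing the same graph-directed structure (same directed graph, all ratios equal to $\rho$).

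Write $\psi_i(x)=\rho x+t_i$ for the IFS defining $T$, so the basic intervals $\psi_i([0,1])$ are ordered from left to right, $0,1\in T$, and (since $n\rho<1$ while the intervals span $[0,1]$) at least one gap occurs. Thus $\{1,\dots,n\}$ splits into maximal touching runs $R_1,\dots,R_q$ of lengths $l_1,\dots,l_q$ with $q\ge 2$. For $k\ge1$ set $T^{(k)}=\bigcup_{r=0}^{k-1}(T+r)$, the chain of $k$ touching copies of $T$. Since consecutive touching pieces are $\rho(T+j)+c$, a run of $l$ pieces equals $\rho T^{(l)}+c$, so each $T^{(k)}$ is the disjoint union of the scaled copies $\rho T^{(l)}+c$ indexed by its maximal touching runs. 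As $T+r$ and $T+(r{+}1)$ touch only at the single point $r+1$, the run $R_q$ of one copy merges with $R_1$ of the next and no other cross-copy merging occurs; hence every run of $T^{(k)}$ has length in $\{l_1,\dots,l_q\}\cup\{l_1+l_q\}$. Setting $K=\max(\max_j l_j,\,l_1+l_q)$, the family $\{T^{(k)}\}_{k=1}^{K}$ is a dust-like graph-directed system whose graph $G$ has vertices $1,\dots,K$, all edge-ratios $\rho$, and one edge $k\to l$ for each run of length $l$ in $T^{(k)}$.

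For the dust side I would take $D_0\in\SD(\brho)$ to be the self-similar set with uniformly spaced pieces $x\mapsto \rho x+(i-1)\gamma$, $i=1,\dots,n$ (dust-like for any $\gamma>0$ because $n\rho<1$), and place the copies at spacing $\delta=\gamma/\rho$, defining $F_k=\bigcup_{r=0}^{k-1}(D_0+r\delta)$; again $n\rho<1$ makes these copies disjoint. The decisive point is that $\delta=\gamma/\rho$ forces any $l$ \emph{consecutive} pieces inside one copy of $D_0$ to equal $\rho F_l+c$, so I may group the $n$ pieces of each copy into arbitrary consecutive blocks. Realizing $G$ then becomes purely combinatorial: for each $k$ I must partition the multiset of run-lengths of $T^{(k)}$ into $k$ sub-multisets, each summing to $n$, and assign one to each copy of $D_0$ in $F_k$. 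Such a partition is explicit: one copy receives $\{l_1,\dots,l_q\}$ and each of the remaining $k-1$ copies receives $\{l_1+l_q\}\cup\{l_2,\dots,l_{q-1}\}$, every group summing to $n=l_1+\cdots+l_q$ and the union matching exactly the run-multiset of $T^{(k)}$. This makes $\{F_k\}_{k=1}^{K}$ a dust-like graph-directed system with the same structure $G$, so Theorem~\ref{thm:gds-strong} gives $T=T^{(1)}\sim F_1=D_0$, and Proposition~\ref{prop-1.1} completes the argument.

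The main obstacle is exactly this matching on the dust side. The subtlety is that on the touching side the cross-boundary run of length $l_1+l_q$ is forced by geometry, whereas consecutive copies of $D_0$ can never be made to touch; the resolution is that Theorem~\ref{thm:gds-strong} needs only the \emph{abstract} graph—the multiset of edge-targets at each vertex—to agree, not any geometric correspondence of the decompositions. The two devices that exploit this freedom, namely choosing the copy-spacing $\delta=\gamma/\rho$ so that every within-copy consecutive block is a genuine scaled copy $\rho F_l+c$, and the partition of each run-multiset into $k$ groups of total $n$, are what let the dust side reproduce $G$ without imitating the touching geometry.
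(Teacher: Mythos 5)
Your proposal is correct and takes essentially the same approach as the paper: the paper proves the $\{1,3,5\}$--$\{1,4,5\}$ case by exactly this construction (chains of touching copies of $M'$, copies of the dust-like set spaced at $\gamma/\rho$, block decompositions whose run-multisets match, then Theorem~\ref{thm:gds-strong}), and then asserts that the same technique gives the general theorem. Your write-up supplies the general bookkeeping the paper leaves implicit --- the run-multiset of $T^{(k)}$ being $\{l_1\}\cup\{l_q\}$ together with $k$ copies of $\{l_2,\dots,l_{q-1}\}$ and $k-1$ copies of $\{l_1+l_q\}$, and the explicit partition into $k$ groups each summing to $n$ --- and this bookkeeping is verified correctly.
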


\subsection{Generalization of the $\{1,3,5\}-\{1,4,5\}$ problem}

A natural generalization of the $\{1,3,5\}-\{1,4,5\}$ problem is when the
contraction ratios are no longer uniform. That is, one may consider the Lipschitz equivalence of $D\in \SD(\brho)$ and $T\in \ST(\brho)$, where $\brho=(\rho_1,\rho_2,\rho_3)$ is a contraction vector in $\bR$. Unlike in the dust-like
setting, the order of the contractions does make a difference. A complete answer was
given in Xi and Ruan \cite{XiRu07}.  Somewhat surprisingly, it is shown that
$D$ and $T$ are Lipschitz equivalent if and only if $\log\rho_1/\log\rho_3$
is rational.


From this result, one naturally asks the following question.
\begin{ques}
  Let $\brho=(\gr_1,\gr_2,\gr_3)$ and $\btau=(\gr_1,\gr_3,\gr_2)$ be two contraction vectors. Let $T\in\ST(\brho)$ and $T^\prime\in \ST(\btau)$ have initial structure shown as in Figure~\ref{figure: ThreeB-set}. Under what conditions are $T$ and $T^\prime$ Lipschitz equivalent?
\end{ques}

\begin{figure}[htbp]
\begin{center}
  \scalebox{0.5}{\includegraphics{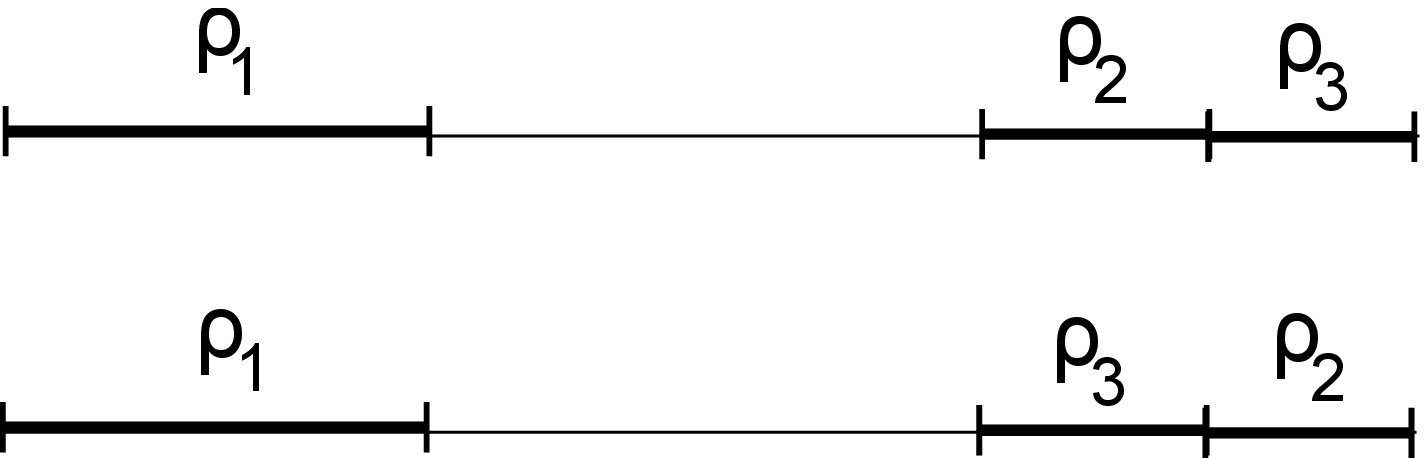}}
\end{center}
\caption{Basic intervals of the self-similar sets $T$ and $T^\prime$}
\label{figure: ThreeB-set}
\end{figure}

The result in \cite{XiRu07} is nevertheless a very special case. It is natural to
exploit such algebraic and geometric connections further in more
general settings. The proof in \cite{XiRu07} is quite complex, and allying it to
the more general setting appears to be very daunting. Recent work by
Ruan, Wang and Xi  \cite{RWX} has overcome some of the difficulties by
introducing a geometric notion called {\em substitutable}. It leads to
several results that provide insight into the problem.

Assume that $\brho=(\rho_1,\ldots,\rho_n)$ is a
contraction vector (in $\bR$) with $n\geq 3$. In the rest of this section, we assume that $D\in \SD(\brho)$, $T\in\ST(\brho)$ and $T$ is the attractor of an IFS $\gY=\{\psi_i(x)=\rho_i x+t_i\}_{i=1}^n$ on $\bR$.

A letter $i\in\{1,2,\ldots,n\}$
is a {\em (left) touching letter} if $\psi_i([0,1])$ and $\psi_{i+1}([0,1])$
are touching, i.e., $\psi_i(1) =\psi_{i+1}(0)$. We use
$\Sigma_T\subset\{1,\ldots,n\}$ to
denote the set of all (left) touching letters. For simplicity
we shall drop the word ``left'' for $\Sigma_T$. Let $\ga$ be the maximal integer such that $\bigcup_{i=1}^\ga \psi_i[0,1]$ is an interval. Similarly, let $\gb$ be the maximal integer such that
$\bigcup_{i=n-\gb+1}^n
\psi_i[0,1]$ is an interval.

Given a cylinder $T_\i$ and a nonnegative integer $k$, we can
define respectively the {\em level $(k+1)$ left touching patch} and the
{\em level $(k+1)$ right touching patch}
of $T_\i$ to be
\begin{equation}\label{eq:Lk-Rk-def}
  L_k(T_\i)=\bigcup_{j=1}^{\ga} T_{\i[1]^k j}, \quad
  R_k(T_\i)=\bigcup_{j=n-\gb+1}^n T_{\i [n]^k j},
\end{equation}
where $[\ell]^k$ is defined to be the word $\underbrace{\ell\cdots\ell}_k$
for any $\ell\in \{1,\ldots,n\}$, with $\i[1]^k j$ as the
concatenation of $\i$, $[1]^k$ and the letter $j$ (similarly for
$\i [n]^k j$). We remark that
$L_0(T_\i)=\bigcup_{j=1}^{\ga} T_{\i j}$ and $R_0(T_\i)=\bigcup_{j=n-\gb+1}^n T_{\i j}$.

A letter $i\in \Sigma_T$ is called \emph{left substitutable} if there exist $\j\in
\Sigma_n^*$ and $k,k'\in \bN$, such that $\diam L_{k}(T_{i+1})=\diam
L_{k'}(T_{i\j})$ and the last letter of $\j$
does not belong to $\{1\}\cup(\gS_T+1)$. Geometrically it simply means that a
certain left touching patch of the cylinder $T_{i+1}$ has the same diameter as
that of some left touching patch
of a cylinder $T_{i\j}$, and as a result we can substitute one of the left
touching patches by the
other without disturbing the other neighboring structures in $T$ because they
have the same diameter. Similarly, $i\in \Sigma_T$
is called \emph{right substitutable} if there exist $\j\in
\Sigma_n^*$ and $k,k'\in \bN$, such that $\diam R_{k}(T_{i})=\diam
R_{k'}(T_{(i+1)\j})$ and the last letter of $\j$
does not belong to $\{n\}\cup\gS_T$. We say that $i\in\Sigma_T$ is
\emph{substitutable} if it is left substitutable or right
substitutable.

\begin{rem}
  Both left and right substitutable properties can also be characterized algebraically.
  By definition, it is easy to check that
  $\diam L_{k}(T_{i+1})=\diam L_{k'}(T_{i\j})$ is
  equivalent to
  \begin{equation}\label{eq:left-disp}
    \gr_{i+1}\gr_1^k=\gr_i\gr_1^{k^\prime}\gr_\j,
  \end{equation}
  while $\diam R_{k}(T_{i})=\diam R_{k'}(T_{(i+1)\j})$ is equivalent
  to
  \begin{equation}\label{eq:right-disp}
    \gr_{i}\gr_n^k=\gr_{i+1}\gr_n^{k^\prime}\gr_\j.
  \end{equation}
\end{rem}

\begin{example}\label{exam2}
  Let $\brho=(\rho_1,\rho_2,\rho_3)$ with $\gS_T=\{2\}$. Then $\ga=1$ and $\gb=2$. Assume that $\log
  \gr_1/\log\gr_3\in\bQ$, i.e. there exist $u,v\in\bZ^+$ such that
  $\gr_1^u=\gr_3^v$. Pick $k=v+1$, $k^\prime=0$ and $\j=2[1]^u$. It
  is easy to check that (\ref{eq:right-disp}) holds for $i=2$ and the
  last letter of $\j$ is
  $1\not\in\{3\}\cup\gS_T$. Thus the touching letter $2$ is right substitutable.
\end{example}

Two main results of \cite{RWX} are listed as follows.

\begin{theorem}[\cite{RWX}]\label{thm:necessary-cond}
  Assume that $D\sim T$. Then $\log \gr_1/\log\gr_n\in\bQ$.
\end{theorem}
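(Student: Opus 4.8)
The plan is to show that Lipschitz equivalence $D \sim T$ forces $\log\gr_1/\log\gr_n$ to be rational by combining the matchable condition (Theorem~\ref{Match}) with a comparison of the two extreme contraction ratios $\gr_1$ and $\gr_n$. Since $D$ and $T$ have the \emph{same} contraction vector $\brho=(\rho_1,\ldots,\rho_n)$, the relevant multiplicative group is $V=\langle\brho\rangle$, and the matchable condition gives a relation $\SR(t) \subset \SW(D,t)\times\SW(T,t)$ and a constant $M_0$ such that whenever $(\bfi,\bfj)\in\SR(t)$ we have $h(\brho_\bfi,\brho_\bfj)\leq M_0$, where $h$ is the pseudo-basis distance on $V$. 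The key point is that $T$, unlike $D$, has a touching structure: cylinders $T_\bfi$ and $T_{\bfj}$ that touch can have a tiny \emph{geometric} distance even when their symbolic words differ substantially, so the matching forced by equivalence must bridge this gap using the two dominant scales $\gr_1$ (the leftmost, largest-step direction) and $\gr_n$ (the rightmost).

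First I would analyze the geometry of $\SW(T,t)$ near a touching point. Because $T$ is regular and touching occurs at some letter, the words in $\SW(T,t)$ that lie on either side of a touching seam have the form obtained by following the extremal letters: on the left side the dominant words look like long blocks of the letter $1$ (diameter $\sim \gr_1^k$) and on the right side like long blocks of the letter $n$ (diameter $\sim \gr_n^\ell$), paralleling the left/right touching patches $L_k$ and $R_k$ defined in~\eqref{eq:Lk-Rk-def}. The essential consequence of touching is that two cylinders from opposite sides of the seam are \emph{metrically adjacent} at every scale $t$, so in any bi-Lipschitz image under $f$ they must be carried to cylinders of $D$ that are close, hence comparable in size. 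Tracking this through the matchable relation, I would extract a sequence of pairs whose symbolic words incorporate growing blocks $[1]^k$ on one side and $[n]^\ell$ on the other, while the distance $h$ between the corresponding group elements stays bounded by $M_0$.

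The crux is then a pigeonhole/boundedness argument in the free abelian group $V$. Since $h(\brho_\bfi,\brho_\bfj)\leq M_0$ means $\brho_\bfi\brho_\bfj^{-1}$ ranges over a finite set of elements of $V$ (those of bounded pseudo-norm), and since along the touching seam the ratio $\brho_\bfi\brho_\bfj^{-1}$ takes the form $\gr_1^{k}\gr_n^{-\ell}$ (times a bounded correction from the finite patch words of length $\leq n_0$), we obtain infinitely many pairs $(k,\ell)$, with $k,\ell\to\infty$, for which $\gr_1^{k}\gr_n^{-\ell}$ lies in a fixed finite subset of $V$. By pigeonhole two distinct such pairs give the same element, yielding $\gr_1^{k_1-k_2}=\gr_n^{\ell_1-\ell_2}$ with $k_1\neq k_2$. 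This is exactly a nontrivial multiplicative relation between $\gr_1$ and $\gr_n$, which forces $\log\gr_1/\log\gr_n\in\bQ$.

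The main obstacle I anticipate is the second step: rigorously producing, at \emph{each} scale $t$, the matched pairs straddling the touching seam and controlling the bounded correction terms so that the ratio genuinely reduces to a power of $\gr_1$ over a power of $\gr_n$ modulo a finite set. One must verify that the smallest cylinder of $D$ containing $f$ of a touching patch behaves uniformly, that the $n_0$-bounded patch words $\bfj_r$ contribute only finitely many group elements, and that as $t\to 0$ the exponents $k,\ell$ genuinely tend to infinity rather than stalling. Handling the interaction between the geometric touching (which lives in $T$) and the purely symbolic matching (which the matchable condition provides) is the delicate heart of the argument; once it is set up, the concluding algebra is a short pigeonhole.
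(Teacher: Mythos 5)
There is no proof of this statement in the survey itself (it is quoted from \cite{RWX}), so I am judging your argument on its own terms — and it has a fatal gap at its foundation. The tool you build everything on, Theorem~\ref{Match} and the matchable-condition machinery, is established in the paper only for pairs of \emph{dust-like} sets: every step rests on the Falconer--Marsh decomposition (Lemma~\ref{lem:FaMa-fundamental}) of $f(E_\bfi)$ into finitely many disjoint cylinders of the target, which in turn needs cylinders of both sets to be relatively clopen. Here $T$ is a touching set; cylinders such as $T_{i[n]^k}$ and $T_{(i+1)[1]^m}$ share the touching point, so they are not clopen, the maximum decomposition fails for both $f$ and $f^{-1}$, and the relation $\SR(\bfi_0,t,f)$ of \eqref{3.6} is simply not available. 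Worse, even if one grants matchability: since $D$ and $T$ have the \emph{same} contraction vector, $\SW(D,t)$ and $\SW(T,t)$ are literally the same set of words, and the identity relation is $M_0$-matchable with $h\equiv 0$. So the abstract matchable condition is \emph{trivially} satisfied for this pair and cannot possibly force $\log\gr_1/\log\gr_n\in\bQ$; compare Proposition~\ref{prop-1.1}, where any two members of $\SD(\brho)$ are equivalent with no rationality constraint. All the information must come from a relation induced by the specific bi-Lipschitz map that detects the touching geometry, and producing such a relation with the bounded-$h$ property for pairs straddling the seam is exactly the step you defer to the last paragraph as ``the delicate heart.'' That step is not a verification one can postpone; it is the entire content of the theorem, and nothing in the survey's dust-like technology supplies it.

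What is salvageable is your endgame, which is correct and standard: if for a sequence of scales $t\to 0$ the left-seam word $i[n]^{k}$ and the right-seam word $(i+1)[1]^{m}$ (with $\gr_i\gr_n^{k}\approx\gr_{i+1}\gr_1^{m}\approx t$) can both be tied to a common word of $D$ with bounded $h$-distance, then $\gr_n^{k}\gr_1^{-m}$ lies in a fixed finite subset of $\langle\brho\rangle$, and pigeonhole yields a nontrivial relation $\gr_n^{k_1-k_2}=\gr_1^{m_1-m_2}$, hence rationality. To make the first half rigorous one has to work directly at the touching point $x_0$ rather than symbolically: for instance, pieces of the form $T\cap[x_0-\delta,x_0]$ \emph{are} clopen in $T$ when $\delta$ is chosen so the interval endpoint falls in a gap, their images in the dust-like set $D$ under a bi-Lipschitz map are finite unions of cylinders of uniformly bounded complexity, and one must then re-derive measure-finiteness statements (in the spirit of the $g_k$ functions and Lemma~\ref{lem:XiRuan}, whose proofs as given also assume both sets dust-like) for these pieces to get the finite set in which $\gr_n^{k}\gr_1^{-m}$ is trapped. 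That reconstruction, carried out in \cite{RWX}, is the real proof; your proposal assumes its conclusion by citing theorems whose hypotheses exclude the very set the theorem is about.
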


\begin{theorem}[\cite{RWX}]\label{thm:sufficient-cond}
  Assume that $\log \gr_1/\log \gr_n\in\bQ$. Then, $D\sim T$ if
  every touching letter for $T$ is substitutable.
\end{theorem}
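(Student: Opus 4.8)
The plan is to deduce $D\sim T$ from Theorem~\ref{thm:gds-strong}. Since a finite copy of $T$ is Lipschitz equivalent to $T$ (\cite{CP,XiRu08}) and any two members of $\SD(\brho)$ are Lipschitz equivalent (Proposition~\ref{prop-1.1}), it is enough to build two families of \emph{dust-like} graph-directed sets $\{E_v\}_{v\in V}$ and $\{F_v\}_{v\in V}$ with the \emph{same} graph-directed structure, in which one distinguished $F_{v_0}$ is a finite copy of $T$ while $E_{v_0}$ is a member of $\SD(\brho)$. The apparent difficulty is that $T$ has touching points and so is not dust-like; however, as the solution of the $\{1,3,5\}$-$\{1,4,5\}$ problem already shows, a finite copy of $T$ \emph{can} be presented as a dust-like graph-directed set, provided one finds a decomposition that is genuinely disjoint at every level and that absorbs each touching point into the relative interior of a cylinder rather than leaving it on a boundary between two cylinders. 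So the whole problem reduces to exhibiting a finite copy of $T$ as the attractor of a \emph{finite} dust-like graph-directed system; the $E$-side can then be matched, because once the $T$-side decomposition carries genuine gaps one may insert into each gapped slot a scaled dust-like block, and by Proposition~\ref{prop-1.1} the particular representative of $\SD(\brho)$ is irrelevant.

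On the $T$-side I would start from $T=\bigcup_{i=1}^n T_i$, whose only failure of disjointness occurs at the touching points $\psi_i(1)=\psi_{i+1}(0)$, $i\in\gS_T$, and group each maximal run of consecutive touching cylinders into a single block, so that every such touching point becomes interior to one block while distinct blocks are separated by genuine gaps. Each block is again a finite copy of $T$, which makes the list of blocks a candidate vertex set, and one tries to iterate the grouping so that each block decomposes into a disjoint union of scaled copies of blocks. The obstruction to closing this recursion is precisely the touching structure surviving \emph{inside} a block, where the right touching patches $R_k(T_i)$ of one cylinder accumulate against the left touching patches $L_k(T_{i+1})$ of the next at their shared point.

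This is where the hypotheses enter. For a touching letter $i$ that is, say, right substitutable, relation~\eqref{eq:right-disp} furnishes a level $k$ and a cylinder $T_{(i+1)\j}$ whose last letter lies outside $\{n\}\cup\gS_T$ --- hence which has a genuine gap on its right --- with $\diam R_k(T_i)=\diam R_{k'}(T_{(i+1)\j})$. Because the two patches have equal diameter and the second sits at a gapped location, this lets me choose, at the offending touching point, an absorbing cylinder that is a scaled copy of a block already in the family, thereby resolving that touching into the graph-directed structure without introducing any new uncontrolled touching (the ``last letter'' conditions in the definition of substitutability are tailored exactly to guarantee the target slot is gapped); left substitutable letters are handled symmetrically through~\eqref{eq:left-disp}. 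The remaining issue is \emph{finiteness} of the vertex set: the blocks produced are built from touching patches whose diameters are powers of $\gr_1$ (left patches) and of $\gr_n$ (right patches), and the assumption $\log\gr_1/\log\gr_n\in\bQ$, i.e. $\gr_1^u=\gr_n^v$ for some $u,v\in\bZp$, makes these two scales commensurable, so that the diameters of all blocks that arise lie in a single geometric progression and only finitely many block-types occur.

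The step I expect to be the main obstacle is exactly this bookkeeping: organising the grouping-and-substitution so that every touching point is absorbed, checking that no step creates a new touching that is not again resolvable, and proving that commensurability of $\gr_1$ and $\gr_n$ bounds the number of block-types so that the recursion genuinely closes into a finite dust-like graph-directed system. (That some such rationality is unavoidable is consistent with Theorem~\ref{thm:necessary-cond}.) Once the finite system is in hand, the same combinatorial structure is realised on the $E$-side by disjoint scaled copies of members of $\SD(\brho)$, and Theorem~\ref{thm:gds-strong} yields $E_{v_0}\sim F_{v_0}$, whence $D\sim T$.
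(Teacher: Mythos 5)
This survey states Theorem~\ref{thm:sufficient-cond} as a result of \cite{RWX} without reproducing its proof, so your proposal has to be judged as an argument in its own right; as such it has a genuine gap. Your skeleton --- reduce to Theorem~\ref{thm:gds-strong} by realising $T$ and a dust-like counterpart inside two finite dust-like graph-directed systems with the same structure, grouping maximal runs of touching cylinders into blocks, using substitutability at touching points and the rationality hypothesis for finiteness --- is the right one, and it is consistent with how \cite{RRX06,RWX} are presented here. But everything up to your last paragraph is the easy reduction already available from \cite{RRX06}; the step you defer as ``the main obstacle'' (absorbing every touching point, showing no unresolvable new touching is created, and bounding the number of block types) is not bookkeeping --- it \emph{is} the theorem. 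In particular, ``substitutability lets me choose an absorbing cylinder'' is not a construction: what the hypothesis provides is an \emph{exchange} of two patches of equal diameter, one flush against the touching point and one flanked by a genuine gap (this is what the last-letter conditions in the definition guarantee), as in \eqref{eq:left-disp} and \eqref{eq:right-disp}. Near the touching point of $T_i$ and $T_{i+1}$ the cylinders accumulate at scales $\gr_i\gr_n^k$ from the left and $\gr_{i+1}\gr_1^{k'}$ from the right, at \emph{all} levels $k,k'$ simultaneously; turning infinitely many exchanges at all scales into one globally disjoint graph-directed decomposition requires an induction that you have not formulated, and without it there is no dust-like system to feed into Theorem~\ref{thm:gds-strong}.

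Two subsidiary steps would also fail as written. First, you invoke \cite{CP,XiRu08} for ``a finite copy of $T$ is Lipschitz equivalent to $T$''; that result is proved for sets satisfying the strong separation condition, and $T$ is precisely not such a set, so it cannot be cited here --- the standard remedy is to arrange $T$ itself as the distinguished vertex (as $M'_1=M'$ is in the $\{1,4,5\}$ construction) and to use finite copies only on the $D$-side, where Proposition~\ref{prop-1.1} and \cite{CP,XiRu08} do apply. Second, your finiteness argument, ``the diameters of all blocks lie in a single geometric progression,'' is false in general: $\log\gr_1/\log\gr_n\in\bQ$ makes only $\gr_1$ and $\gr_n$ commensurable, while the middle ratios $\gr_2,\dots,\gr_{n-1}$ are unconstrained (global commensurability is the hypothesis of Theorem~\ref{thm:suff-cond-log}, not of this theorem). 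Finiteness of the vertex set must instead be extracted from the fact that the exchanges only ever involve the scales $\gr_1^k$ and $\gr_n^k$ relative to the touching cylinders; making that precise is part of the missing induction, not a consequence of a common geometric progression of diameters.
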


Theorem~\ref{thm:sufficient-cond} allows us to establish a
more general corollary. The argument used to show the substitutability in
Example \ref{exam2} is easily extended to prove the following corollary:

\begin{corol}[\cite{RWX}]  \label{corol-1.3}
  $D\sim T$ if one of the following conditions holds:
  \begin{enumerate}
    \item[\rm (1)] ~$\log \gr_i/\log \gr_j\in\bQ$ for
  all $i,j\in\{1,n,\ga\}\cup (\gS_T+1)$.

  \item[\rm (2)] ~$\log \gr_i/\log \gr_j\in\bQ$ for
  all $i,j\in\{1,n,n-\gb+1\}\cup \gS_T$.
  \end{enumerate}
\end{corol}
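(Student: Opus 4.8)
The plan is to derive the corollary from Theorem~\ref{thm:sufficient-cond}, whose two hypotheses I verify in turn. The rational-ratio hypothesis $\log\gr_1/\log\gr_n\in\bQ$ is immediate under either condition, since both $1$ and $n$ lie in the relevant index set $\{1,n,\ga\}\cup(\gS_T+1)$ (respectively $\{1,n,n-\gb+1\}\cup\gS_T$). Moreover the two conditions are exchanged by the reflection $x\mapsto 1-x$: it sends $T$ to a regular touching self-similar set $\widetilde T$ with the reversed contraction vector, and under the induced relabelling of letters it matches $1\leftrightarrow n$, the left-block index $\ga$ with the right-block index $n-\gb+1$, and $\gS_T+1$ with $\gS_T$. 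Hence condition~(1) for $\widetilde T$ is precisely condition~(2) for $T$; since reflection is an isometry and so preserves $\sim$, it suffices to prove the corollary under condition~(1).

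So assume condition~(1); it remains to show every touching letter $i\in\gS_T$ is substitutable, and I will show each is \emph{right} substitutable, generalizing Example~\ref{exam2}. As the ratios indexed by $S:=\{1,n,\ga\}\cup(\gS_T+1)$ are pairwise commensurable, write $\gr_p=\theta^{c_p}$ for $p\in S$, where $\theta$ generates the infinite cyclic group they span and $c_p\in\bZ^+$ with $\gcd\{c_p:p\in S\}=1$. Fix $i\in\gS_T$; then $i+1\in\gS_T+1\subseteq S$, so $\gr_{i+1}\in\langle\theta\rangle$. I would look for a right substitution of the form $\j=i\j'$ with $k'=0$: by \eqref{eq:right-disp} the requirement $\gr_i\gr_n^{k}=\gr_{i+1}\gr_\j$ reduces, after cancelling $\gr_i$, to $\gr_{\j'}=\gr_n^{k}\gr_{i+1}^{-1}$, an element of $\langle\theta\rangle$ with $\theta$-exponent $c_nk-c_{i+1}$. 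Prepending the letter $i$ to $\j$ is exactly the device that disposes of the factor $\gr_i$, so that we never have to assume $\gr_i$ is commensurable for touching letters $i$ lying outside the left block.

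The heart of the matter is then to realize the target exponent $c_nk-c_{i+1}$ by a word $\j'$ in the letters of $S$ whose last letter is $\ga$. This is where $\ga$ and $n$ earn their places in condition~(1): by the maximality of $\ga$ one has $\ga\notin\gS_T$, and $\ga\neq n$ since $T\neq[0,1]$, so $\ga\notin\{n\}\cup\gS_T$ is an admissible terminal letter for a right substitution; meanwhile $\gr_n$ supplies an arbitrarily large, adjustable power. Concretely I would take $k$ large, write the exponent as $c_\ga+E'$ with $E'$ large, and use $\gcd\{c_p:p\in S\}=1$ (a numerical-semigroup argument) to express $E'$ as a nonnegative integer combination $\sum_{p\in S}m_pc_p$; letting $\j'$ be any word realizing this combination and ending in the letter $\ga$ yields $\gr_{\j'}=\theta^{c_nk-c_{i+1}}$ with admissible terminal letter. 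Thus $i$ is right substitutable, and since $i\in\gS_T$ was arbitrary, every touching letter is substitutable, so Theorem~\ref{thm:sufficient-cond} gives $D\sim T$.

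The step I expect to be the main obstacle is precisely this last construction: matching the diameter relation \eqref{eq:right-disp} while simultaneously respecting the terminal-letter constraint and avoiding any commensurability assumption on $\gr_i$. The prepending trick removes the commensurability issue cleanly, and the only genuinely delicate point is the residue analysis in the numerical-semigroup step, namely guaranteeing that for some large $k$ the quantity $c_nk-c_{i+1}-c_\ga$ is representable over $\{c_p:p\in S\}$; this is routine once one records that $\gcd\{c_p:p\in S\}=1$ and $c_n>0$.
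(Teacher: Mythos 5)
Your proposal is correct and follows exactly the route the paper indicates: deduce the corollary from Theorem~\ref{thm:sufficient-cond} by showing every touching letter is substitutable, generalizing the prepending-and-commensurability trick of Example~\ref{exam2} (where $\j=2[1]^u$ begins with the letter $i$ precisely so that $\gr_i$ cancels). Your two additions --- the reflection $x\mapsto 1-x$ reducing condition (2) to condition (1), and the numerical-semigroup argument with $\gcd\{c_p\}=1$ to realize the exponent $c_nk-c_{i+1}-c_\ga$ with terminal letter $\ga\notin\{n\}\cup\gS_T$ --- are just the careful fleshing-out of what the paper calls the ``easy extension'' of that example, so this is the same proof.
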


The following result, which we state as a theorem because of the
simplicity of its statement, is a direct corollary of Corollary \ref{corol-1.3}.

\begin{theorem}[\cite{RWX}]\label{thm:suff-cond-log}
  Assume that $\log \gr_i/\log\gr_j \in \bQ$ for all
  $i,j\in \{1,\ldots,n\}$. Then $D\sim T$.
\end{theorem}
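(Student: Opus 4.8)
The plan is to deduce this statement directly from Corollary~\ref{corol-1.3}, of which it is a specialization. The key observation is that the hypothesis here---that $\log\gr_i/\log\gr_j\in\bQ$ for \emph{every} pair $i,j\in\{1,\ldots,n\}$---is strictly stronger than either of the two alternative hypotheses in Corollary~\ref{corol-1.3}, each of which only demands rational commensurability of the logarithms over a restricted subset of the index set $\{1,\ldots,n\}$.

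First I would verify the relevant set inclusion. By definition $\ga\in\{1,\ldots,n\}$, and of course $1,n\in\{1,\ldots,n\}$. Moreover, a touching letter $i\in\gS_T$ satisfies $\psi_i(1)=\psi_{i+1}(0)$, so necessarily $i\le n-1$ and hence $i+1\le n$; thus $\gS_T+1\subseteq\{2,\ldots,n\}$. Combining these facts gives
$$
  \{1,\,n,\,\ga\}\cup(\gS_T+1)\;\subseteq\;\{1,\ldots,n\}.
$$
Consequently, the blanket assumption $\log\gr_i/\log\gr_j\in\bQ$ for all $i,j\in\{1,\ldots,n\}$ holds in particular for all $i,j$ in the smaller index set $\{1,n,\ga\}\cup(\gS_T+1)$, which is exactly condition~(1) of Corollary~\ref{corol-1.3}.

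Having checked that condition~(1) is met, I would simply invoke Corollary~\ref{corol-1.3} to conclude $D\sim T$. (Alternatively, one could verify condition~(2) via the symmetric inclusion $\{1,n,n-\gb+1\}\cup\gS_T\subseteq\{1,\ldots,n\}$, which follows from $1\le\gb\le n$, giving $1\le n-\gb+1\le n$, together with $\gS_T\subseteq\{1,\ldots,n-1\}$.) There is no genuine obstacle here: all the substantive work---constructing the graph-directed substitution structure that underlies Theorem~\ref{thm:sufficient-cond} and hence Corollary~\ref{corol-1.3}---has already been carried out, and the only thing left to confirm is the trivial index inclusion showing that the global rationality hypothesis subsumes the localized one.
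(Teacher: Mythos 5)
Your proposal is correct and matches the paper's own treatment: the paper explicitly states that Theorem~\ref{thm:suff-cond-log} is a direct corollary of Corollary~\ref{corol-1.3}, which is exactly your reduction. Your verification of the index inclusion $\{1,n,\ga\}\cup(\gS_T+1)\subseteq\{1,\ldots,n\}$ is the only step needed, and it is carried out correctly.
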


\section{Touching IFS and Lipschitz equivalence: Higher dimensional case}

Much of the work on Lipschitz equivalence with touching structure is set in $\mathbb{R}$. What about higher dimensions? While many of the results in $\mathbb{R}$ should generalize to
higher dimensions, some may not.

Let $Q=[0,1]\times [0,1]$ be the unit square. Given a positive integer $n\geq 3$
and a digit set $\SD\subset \{0,1,\ldots,n-1\}^2$,  there exists a unique nonempty compact $K\subset Q$ satisfying
\begin{equation*}
  K=\bigcup_{d\in \SD}\frac{1}{n} (K+d).
\end{equation*}
We denote the set $K$ by $K(n,\SD)$. Xi and Xiong \cite{XiXi10} obtained the following result.
\begin{theorem}[\cite{XiXi10}]
  Assume that $K(n, \SD_1)$ and $K(n, \SD_2)$ are totally disconnected. Then $K(n, \SD_1) \sim K(n, \SD_2)$ if and only if $\# \SD_1 = \# \SD_2 $.
\end{theorem}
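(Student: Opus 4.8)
The plan is to prove the two directions by entirely different means: the forward implication is a dimension count, while the reverse is an application of the graph-directed method of Theorem~\ref{thm:gds-strong} after a regrouping of cylinders forced by total disconnectedness.

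For the forward direction I would first note that \emph{every} fractal square $K(n,\SD)$ satisfies the open set condition, with the open unit square $U=(0,1)^2$ as witness: for distinct digits $d,d'\in\{0,\dots,n-1\}^2$ the cells $\frac1n(U+d)$ and $\frac1n(U+d')$ are disjoint open subsets of $U$. Hence $\dim_H K(n,\SD)=\log(\#\SD)/\log n$ regardless of connectivity. Since a bi-Lipschitz bijection preserves Hausdorff dimension, $K(n,\SD_1)\sim K(n,\SD_2)$ forces $\log(\#\SD_1)/\log n=\log(\#\SD_2)/\log n$, i.e. $\#\SD_1=\#\SD_2$. Note that this half uses neither total disconnectedness nor $n\ge 3$; the hypotheses are spent entirely on the converse.

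For the converse, set $m=\#\SD_1=\#\SD_2$ and fix a reference dust-like self-similar set $D_m$ with contraction vector $(\underbrace{1/n,\dots,1/n}_{m})$, realized in some $\bR^N$ with $N$ large enough to separate $m$ copies; all such sets are mutually Lipschitz equivalent by Proposition~\ref{prop-1.1}, and each is bi-Lipschitz to the common symbolic model by Proposition~\ref{prop-1.2}. It therefore suffices to prove the reduction $K\sim D_m$ for a single totally disconnected $K=K(n,\SD)$ with $\#\SD=m$. The difficulty is that $K$ is almost never dust-like --- adjacent cylinders may touch at every scale --- so the coding map of Proposition~\ref{prop-1.2} is not bi-Lipschitz. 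I would repair this by regrouping, in the spirit of the solution of the $\{1,3,5\}$--$\{1,4,5\}$ problem: introduce the touching graph $G_k$ whose vertices are the level-$k$ cylinders of $K$ and whose edges join intersecting cylinders, and call a connected component of $G_k$ a \emph{cluster}. If the clusters have uniformly bounded size, then self-similarity produces only finitely many cluster shapes up to the similarities of the IFS, and since distinct clusters lie at a positive distance comparable to their diameter, the refinement rule ``subdivide a cluster and regroup'' closes these finitely many shapes into a dust-like graph-directed system $\{F_i\}_{i\in V}$ whose edge ratios are powers of $1/n$. Then $K$ is a finite union, with positive gaps, of top-level clusters, each equal to some $F_i$, so $K$ is bi-Lipschitz to the disjoint union $\bigsqcup_i F_i$.

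Running a parallel, genuinely separated dust-like graph-directed system $\{E_i\}_{i\in V}$ with the \emph{same} graph and the same ratios and invoking Theorem~\ref{thm:gds-strong} then yields $F_i\sim E_i$ for every $i$, hence $K\sim\bigsqcup_i E_i$. The final identification $\bigsqcup_i E_i\sim D_m$ is the less novel part: after splitting each edge of ratio $n^{-\ell}$ into a path of $\ell$ edges of ratio $1/n$, one obtains a dust-like graph-directed system of uniform ratio $1/n$ whose adjacency matrix has Perron eigenvalue $n^{s}=m$, and the uniform-ratio theory underlying Theorem~\ref{theo:uniform-contra} identifies its attractor with $D_m$. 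The true obstacle is the uniform bound on cluster size, and this is exactly where total disconnectedness must be used: an unbounded family of touching chains would, after rescaling to unit diameter and passing to a Hausdorff limit, produce a nondegenerate continuum inside a tangent copy of $K$, contradicting that $K$ has only singleton connected components. Making this rigorous --- controlling how touching propagates across scales under the self-similar structure and extracting the limiting continuum --- is the heart of the argument; the bookkeeping of the cluster graph and the verification that its defining unions are disjoint are the remaining delicate points.
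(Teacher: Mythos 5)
First, a caveat: the survey does not prove this theorem --- it is stated with a citation to \cite{XiXi10} and no argument --- so your proposal can only be measured against the literature. Your forward direction is correct and standard: every fractal square satisfies the open set condition with witness $(0,1)^2$, so $\dim_H K(n,\SD)=\log\#\SD/\log n$, and bi-Lipschitz maps preserve dimension. Your reverse direction also has the right architecture (reduce to $K(n,\SD)\bleq D_m$ for the dust-like model $D_m$ with $m$ branches of ratio $1/n$, group level-$k$ cells into connected clusters, close the cluster types into a finite dust-like graph-directed system, apply Theorem~\ref{thm:gds-strong}), and this is the broad strategy of \cite{XiXi10} as well. The gaps are in the two steps you yourself flag, and the first one is serious as written.

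The crux --- ``total disconnectedness implies uniformly bounded cluster cardinality'' --- is not delivered by the limit argument you describe, and the contradiction you invoke is a non sequitur. Writing $H_k$ for the union of all level-$k$ cells, what a Hausdorff-limit (or nested-intersection) argument gives for free is only that cluster \emph{diameters} tend to $0$: components of $H_k$ with diameters bounded below have a subsequential Hausdorff limit that is a nondegenerate continuum contained in $\bigcap_k H_k=K$, because the sets $H_k$ decrease to $K$. Bounded \emph{cardinality} is strictly stronger: it is the rate statement that cluster diameters are $O(n^{-k})$, and no rate follows from mere convergence. To rule out unbounded cardinality you must rescale, as you propose; but once a cluster is rescaled to unit diameter its cells shrink to points, and the limiting continuum is then trapped not in $K$, nor in any single ``tangent copy of $K$,'' but only in a weak tangent of $K$, which for a fractal square is a \emph{finite union of scaled lattice translates} of $K$ (since $n^jK=\bigcup_{\mathbf{d}}(K+\mathbf{d})$). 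So the contradiction cannot be with ``$K$ has only singleton components.'' To close the argument you need two further ingredients: (i) the easy observation that every cell of a cluster meets $K$, so the rescaled fractal content is Hausdorff-close to the rescaled cluster and has the same limit; and (ii) the finite closed sum theorem for zero-dimensional compacta, i.e.\ a finite union of compact totally disconnected sets is totally disconnected. Item (ii) is genuine mathematics, not bookkeeping: without compactness the statement is false ($[0,1]$ is the union of its rationals and its irrationals, each totally disconnected). With (i) and (ii) added, your compactness step does work; without them it proves nothing.

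The final identification is also not covered by what you cite. Theorem~\ref{theo:uniform-contra} is a statement about self-similar sets, not graph-directed attractors, so it cannot ``identify'' a dust-like graph-directed attractor with uniform ratio $1/n$ and Perron eigenvalue $m$ with $D_m$; that step needs its own argument (for instance, using that in your system the number of depth-$k$ pieces lies between $c\,m^k$ and $C\,m^k$, together with a regrouping/matching construction, or an adaptation of the criterion of \cite{Xi10,XiRu08} quoted in Section~3). A smaller point: a ``genuinely separated'' realization of your graph with all ratios $1/n$ need not fit in $\bR^2$ when out-degrees exceed $n^2$; realize it in $\bR^N$ for large $N$ (harmless, since Lipschitz equivalence is purely metric), or skip it --- your $\{F_i\}$ system is already dust-like. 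All of this is repairable, but none of it follows from the theorems you invoke.
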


Lau and Luo \cite{LauLuo12}, Roinestad \cite{Roi10}, and Wen, Zhu and Deng \cite{WZD12}
discussed the Lipschitz equivalence of $K(n, \SD_1)$ and $K(n, \SD_2)$  when at least
one of them has touching structure. However, unlike the one dimensional case,
$K$ may contain non-trivial connected components which makes the problem much harder.
\begin{ques}\label{ques-higher-dim}
  Establish necessary and sufficient conditions for the Lipschitz equivalence of
  $K(n, \SD_1)$ and $K(n, \SD_2)$. Clearly we must have $\# \SD_1 = \# \SD_2 $, but in
  general this is not sufficient. A simple case is $n=3$ and
  $$\SD_1=\{(0,0), (0,1), (0,2), (2,0), (2,2)\},$$
  $$\SD_2=\{(0,0), (0,1), (0,2), (2,1), (2,2)\}.$$ See Figure~\ref{figure: Block}. It is not known
  whether $K(3,\SD_1)\sim K(3,\SD_2)$.
\end{ques}

\begin{figure}[htbp]
\begin{center}
  \scalebox{0.5}{\includegraphics{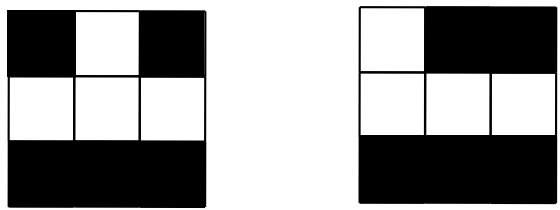}}
\end{center}
\caption{Initial structure of the self-similar sets $K(3,\SD_1)$ and $K(3,\SD_2)$}
\label{figure: Block}
\end{figure}

The sets discussed above are all self-similar. It is natural also to consider the Lipschitz equivalence of self-affine.

\begin{ques}
  What happen if the sets are self-affine but not self-similar? For example, when are McMullen carpets Lipschitz equivalent?
\end{ques}

Rao, Ruan and Yang \cite{RRY08} defined gap sequences for compact subsets in higher dimensional Euclidean space. \cite{RRY08} also proved that the gap sequence is a Lipschitz invariant. However, we do not know whether gap sequences can be used to prove that two self-similar sets (or self-affine sets) with the same Hausdorff dimension are not Lipschitz equivalent.

\noindent{\bf Acknowledgements.} The authors wish to thank D.-J.~Feng, L.-F.~Xi and Y.~Xiong for valuable discussions. Problem~\ref{ques-algo} comes from the discussion with  Feng, while we know the simple case in Problem~\ref{ques-higher-dim} from Xiong. We also wish to thank the referee for many helpful suggestions.


\bibliographystyle{amsplain}

\end{document}